\pdfoutput=1
\documentclass[a4paper, reqno, 12pt, toc]{amsart}
\usepackage[utf8]{inputenc}
\usepackage[T1]{fontenc}
\usepackage{lmodern, csquotes, graphicx}
\usepackage{amssymb, physics, bm}
\usepackage{hyperref}
\usepackage[subsetnonamb, bbsets]{jkmath}
\usepackage[backend=biber,style=alphabetic,sorting=nyt]{biblatex}
\usepackage{dynkin-diagrams}

\newtheorem{theorem}{Theorem}[section]
\newtheorem{definition}[theorem]{Definition}
\newtheorem{lemma}[theorem]{Lemma}
\newtheorem{proposition}[theorem]{Proposition}

\newtheorem{example}[theorem]{Example}

\newtheorem{remark}[theorem]{Remark}

\DeclareMathOperator{\id}{id}

\DeclareMathOperator{\ad}{ad}

\DeclareMathOperator{\Aut}{Aut}

\DeclareMathOperator{\diag}{diag}

\newcommand{\uq}{\mathbf{U}}
\newcommand{\uqb}{\mathbf{B}}
\newcommand{\uqbns}{\mathbf{B}_{\mathrm{ns}}}

\newcommand{\Ins}{I_{\mathrm{ns}}}

\begin{document}
\title{Characters of Quantum Symmetric Pairs}

\author{Philip Schl\"osser}
\address{Radboud University, IMAPP-Mathematics, Heyendaalseweg 135, 6525 AJ NIJMEGEN, the Netherlands}
\email{philipschloesser98@gmail.com}
\subjclass[2020]{16T20, 17B37}
\keywords{Quantum symmetric pair, iquantum group}

\thanks{This work is funded by grant \texttt{OCENW.M20.108} of the Dutch Research Council NWO
and was inspired by a question asked during the MMRT 2026 conference in Ottawa}

\begin{abstract}
    We classify all characters of quantum symmetric pair coideal subalgebras
    for generalised Satake diagrams of finite and affine type.
\end{abstract}

\maketitle

\section{Introduction}
The theory of quantum symmetric pairs $(\uq,\uqb)$ is a quantisation, a 
$q$-algebra version, of the theory of symmetric pairs 
$(\mathfrak{g},\mathfrak{h})$ (or rather their enveloping algebras) of
Lie algebras.

They were first defined ad hoc in special cases and then studied in general
(say in \cite{Le03} and preceding works) specifically in the context of
computing their zonal spherical functions, which was accomplished in 
\cite{Le04} and \cite{Sch26}.
Nevertheless, also their abstract theory has been receiving
more and more attention, e.g. \cite{Wat23, BK19, KS24}.
In particular, various generalisations become more and more relevant:
the affine case that is included in \cite{Kol14}, and so-called 
pseudo-symmetric cases (\cite{RV20}): cases that correspond to pairs
$(\mathfrak{g},\mathfrak{h})$ of Lie algebras where $\mathfrak{h}$ is no
longer fixed by an involution of $\mathfrak{g}$, and may fail to be
reductive.
Cases which nevertheless can be described using very similar combinatorial
data.

One branch of development of the abstract theory is the representation
theory of $\uqb$: it is usually not a quantum group (a fortiori not a Hopf 
algebra) but what is usually called an $\imath$quantum group.
The representation theory of $\imath$quantum groups is very intractable
and has until now mostly been studied on a case-by-case basis (e.g.
\cite{IK05, AKR17, Wat21, KS24, SW26}, and
ongoing research of Balagovi\'c, Kolb, and Liu) 
and/or by using specialisation arguments (e.g. \cite{Mee26}).

A good first step towards a general understanding can therefore be made
by considering/classifying the 1-dimensional representations, i.e. the
characters (also to illustrate the difficulty of a more general classification).
This has been partially achieved in \cite{Mee26}, where 
(for finite type and symmetric pairs) the integrable characters
are classified, i.e. those that appear as subrepresentations of standard
finite-dimensional modules of $\uq$ and therefore are the only ones relevant
for spherical functions.
However, it is shown in \cite[Example~7.13]{Le00} and more systematically in
\cite[\S3]{IK05} (nonclassical representations with 
$\mathbf{m}_n=\qty(\frac{1}{2},\dots,\frac{1}{2})$) that there exist genuinely 
different characters that are \emph{not}
specialisable.
To the extent of the author's knowledge there does not yet exist a
general classification of \emph{all} characters of $\uqb$ 
except in the particular cases cited above.

In this paper, there are two main theorems that together provide the
classification claimed in the abstract.

The first states that a character $\chi\in\widehat{\uqb}$ is descibed
completely by its restrictions to the torus $Y^\Theta$ (which factors to a smaller
Abelian group $Y^\Theta/2Q_\uqb$)
and to a split $\imath$quantum group
$\uqbns$ (see Section~\ref{sec-qsp} for notation).

\begin{theorem}\label{thm-main-thm}
    Let $\chi$ be a character of $\uqb$. 
    Define $\xi: Y^\Theta\to k^\times$ by
    \[
        \xi(h) := \chi(K_h)\qquad (h\in Y^\Theta).
    \]
    Then $\xi$ descends to a group homomorphism $Y^\Theta/2Q_\uqb\to k^\times$
    and the map
    \[
        \widehat{\uqb}\to (Y^\Theta/2Q_\uqb)^\wedge\times
        \widehat{\uqbns},\qquad
        \chi\mapsto (\xi, \chi|_{\uqbns})
    \]
    is a bijection.
    In particular, $E_i,F_i$ ($i\in I_\bullet$) and
    $B_i$ ($i\in I_\circ\setminus\Ins$) are mapped to zero by every character.
\end{theorem}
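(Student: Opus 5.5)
The plan is to work with the standard generators of $\uqb$: the torus $K_h$ ($h\in Y^\Theta$), the subalgebra $\uq_\bullet$ generated by $E_i,F_i,K_i^{\pm1}$ ($i\in I_\bullet$), and the $B_i$ ($i\in I_\circ$). Fix a character $\chi$ and take the following steps.

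\emph{Step 1: the $\uq_\bullet$ generators vanish.} Since $\chi$ is multiplicative and $k$ is commutative, $\chi$ kills every commutator. The relation $K_iE_iK_i^{-1}=q_i^{2}E_i$ gives $(1-q_i^2)\chi(E_i)=0$, hence $\chi(E_i)=0$, and likewise $\chi(F_i)=0$ for $i\in I_\bullet$.

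\emph{Step 2: the torus relations.} For $i\in I_\circ$ and $h\in Y^\Theta$ one has $K_hB_iK_h^{-1}=q^{-\langle h,\alpha_i\rangle}B_i$. So whenever $\chi(B_i)\neq0$ we get $\xi(h)^{\dots}$ constraints, namely $q^{\langle h,\alpha_i\rangle}=1$ for the relevant $h$. Using the relation for $E_i-F_i$ among $\uq_\bullet$ generators, the identity $[E_j,F_j]=(K_j-K_j^{-1})/(q_j-q_j^{-1})$ together with Step 1 gives $\chi(K_j)=\chi(K_j)^{-1}$, so $\chi(K_j)^2=1$ for $j\in I_\bullet$. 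For $i\in I_\circ$ with $\tau(i)\ne i$ or $i\notin\Ins$, I would use the defining relation $[B_i,B_{\tau(i)}]=$ (a multiple of $\mathcal Z_i K_{\alpha_i-\theta\alpha_i}^{\pm}$ minus its inverse-type partner), or the Serre-type relations for $i\in I_\circ$ adjacent to $I_\bullet$. Applying $\chi$ kills the left side and yields $\xi(\alpha_i-\theta\alpha_i)^2=1$, or rather the appropriate generators of $2Q_\uqb$ being mapped to $1$. Here $Q_\uqb$ is as defined in Section~\ref{sec-qsp}, presumably the lattice spanned by the $\alpha_i-\theta(\alpha_i)$ together with $I_\bullet$ roots, suitably intersected with $Y^\Theta$. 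This is what makes $\xi$ factor through $Y^\Theta/2Q_\uqb$.

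\emph{Step 3: vanishing of $B_i$ for $i\in I_\circ\setminus\Ins$.} Such $i$ is either connected to $I_\bullet$ or has $\tau(i)\ne i$. In the first case, the relation $[E_j,B_i]=0$ or $[F_j,B_i]$-type relations from the $\uq_\bullet$-action ($\ad$-invariance: $B_i$ spans, with its $\uq_\bullet$-translates, a nontrivial $\uq_\bullet$-module) yields an equation $c\,\chi(B_i)\chi(\ldots)=\chi(\text{element of }\uq_\bullet\text{-span})$. More simply, $K_jB_iK_j^{-1}=q^{-(\alpha_j,\alpha_i)}B_i$ with $(\alpha_j,\alpha_i)\ne0$ gives $(1-q^{\pm a})\chi(B_i)=0$. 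In the second case I would use a torus element $h\in Y^\Theta$ with $\langle h,\alpha_i\rangle\ne0$, which exists precisely because $\tau(i)\ne i$, for instance $h=h_i+h_{\tau(i)}$ adjusted by $I_\bullet$. The conjugation relation then gives $\chi(B_i)=0$. I expect this to be the main obstacle: one must check, in all generalised Satake cases including affine and pseudo-symmetric ones, that a suitable $h$ or $\uq_\bullet$ element exists, and that the constraints of Step 2 are exactly $2Q_\uqb$ and nothing more.

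\emph{Step 4: bijectivity.} Injectivity is immediate, since the generators are determined by $\xi$, $\chi|_{\uqbns}$ and zeros. For surjectivity, given $(\xi,\psi)$ define $\chi$ on generators accordingly and verify the defining relations of $\uqb$ from \cite{Kol14}, using a presentation by generators and relations: the torus relations, the $\uq_\bullet$ relations, the $q$-Serre/$\imath$Serre relations, and the relations for the $B_i$. The relations internal to $\uqbns$ hold because $\psi$ is a character. The mixed relations have each term containing a killed generator, or reduce to $\xi$ trivial on $2Q_\uqb$. The remaining check is that on the torus of $\uqbns$, namely $K_h$ with $h\in Y^\Theta$ relevant to $\Ins$, the values of $\xi$ and $\psi$ are compatible. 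I would arrange this by noting that $\uqbns$ contains no torus beyond what is forced, or that its relations involve only $\mathcal Z_i$-constants rather than $K$'s, so no compatibility condition arises.
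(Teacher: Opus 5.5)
Your skeleton matches the paper's. Torus conjugation kills $E_j,F_j$ ($j\in I_\bullet$) and the non-split $B_i$. Applying $\chi$ to commutator relations produces the lattice. Injectivity is read off from generators, and surjectivity comes from Kolb's presentation.

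\textbf{The main gap.} It sits exactly at the point you defer as the ``main obstacle'': you never show that the relations involving the elements $\mathcal{Z}_i$ impose nothing beyond $2Q_\uqb$.

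In the paper, $Q_\uqb$ is generated only by $h_{\alpha_j}$ ($j\in I_\bullet$) and by $h_{\alpha_i-\alpha_{\tau(i)}}$ for $I[i]$ of type $\diag(\mathsf{A}_1)$. Your guess $\alpha_i-\Theta(\alpha_i)$ has the wrong sign: that vector is $\Theta$-anti-invariant, so it does not lie in $Y^\Theta$. Your Step~2 also expects the relations for white nodes joined to $I_\bullet$ to contribute further torus constraints. If they did, surjectivity onto $(Y^\Theta/2Q_\uqb)^\wedge\times\widehat{\uqbns}$ would fail.

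Likewise, in Step~4 the claim that every term of a mixed relation contains a killed generator is unjustified in two places:
\begin{itemize}
\item For $F_{ij}(B_i,B_j)=C_{ij}$ with $\tau(i)=i\notin\Ins$ and $j\in\Ins$. For $a_{ij}=-1$, $C_{ij}$ is a multiple of $\mathcal{Z}_iB_j$, and $\chi(B_j)$ is arbitrary.
\item For the relation between $B_i$ and $B_{\tau(i)}$ when $I[i]$ has black nodes, whose right-hand side is built from $\mathcal{Z}_i$ and $\mathcal{Z}_{\tau(i)}$.
\end{itemize}

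The missing idea is Lemma~\ref{lem-Z-annihilate}. If $I[i]$ has black nodes, then $\mathcal{Z}_i\in\uqb_{I_\bullet}$ has weight $w_\bullet\alpha_{\tau(i)}-\alpha_{\tau(i)}$, a nonzero element of the root lattice of $I_\bullet$. Conjugating by a suitable $K_{h_j}$ with $j\in I_\bullet$ therefore forces $\chi(\mathcal{Z}_i)=0$. All the relations above then hold automatically, and the only torus constraints are:
\begin{itemize}
\item $\xi(2h_{\alpha_j})=1$, from $[E_j,F_j]$;
\item $\xi(2h_{\alpha_i-\alpha_{\tau(i)}})=1$ in the $\diag(\mathsf{A}_1)$ case, from $[B_i,B_{\tau(i)}]=\bm{c}_i[\mathcal{Z}_i]_{q_i}$ with $\mathcal{Z}_i=K_{\alpha_{\tau(i)}-\alpha_i}$.
\end{itemize}

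\textbf{The Step~3 witness.} Your choice $h=h_i+h_{\tau(i)}$ does not work. If $i$ is not joined to $I_\bullet$, then $w_\bullet$ fixes $h_i$ and $h_{\tau(i)}$, so $\Theta(h_i+h_{\tau(i)})=-(h_i+h_{\tau(i)})$. Since the $h_j$ with $j\in I_\bullet$ are $\Theta$-fixed, no adjustment by them helps. The right choice (the paper's) is $h=h_i-h_{\tau(i)}\in Y^\Theta$, with $\langle h,\alpha_i\rangle=2-\langle h_{\tau(i)},\alpha_i\rangle\ge 2$.

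\textbf{The torus of $\uqbns$.} Your closing worry dissolves. $\uqbns$ contains no torus, and $\mathcal{Z}_j=1$ for $j\in\Ins$. Moreover $K_hB_j=B_jK_h$ for $h\in Y^\Theta$ and $j\in\Ins$, because $\Theta\alpha_j=-\alpha_j$ forces $\langle h,\alpha_j\rangle=0$.

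\textbf{Comparison of routes.} With these repairs, your direct check of Kolb's relations on generators is a sound proof of surjectivity. It is in fact more robust than the paper's detour through a character of $\uq_{I\setminus\Ins}$. Such a character is trivial on all of $Y^\Theta\cap 2Q'$, which can be strictly larger than $2Q_\uqb$. For example, it contains $2(h_1-h_3)$ for $\mathsf{A}_3$ with $I_\bullet=\{2\}$ and $\tau=(1\,3)$, whereas $2Q_\uqb=2\mathbb{Z}h_2$ there.
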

\begin{proof}
    The proof is located in Section~\ref{sec-proof}.
\end{proof}

The second main theorem provides a classification of characters for split
$\imath$quantum groups in terms of combinatorial data.

\begin{theorem}\label{thm-char-split}
    Let $(\uq,\uqb)$ be a split quantum symmetric pair with $Y^\Theta=0$,
    and let $\chi$ be the algebra homomorphism $k\langle B_i\mid i\in I\rangle\to k$
    given by $B_i\mapsto b_i$.
    Write $\beta_i:=\frac{b_i^2}{\bm{c}_iq_i}$.

    $\chi$ factors through a character of $\uqb$ if and only
    $(\beta_i)_{i\in I}$ is a split character description (cf.
    Definition~\ref{def-split-char}).
\end{theorem}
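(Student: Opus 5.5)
In the split case with $Y^\Theta=0$, $\uqb$ is generated by the $B_i$ ($i\in I$) alone. By the standard presentation of $\imath$quantum groups (Letzter, Kolb; Kolb--Pellegrini for the affine case), the only relations are the $\imath$Serre relations. For each pair $i\neq j$ there is one relation
\[
    \sum_{r+s=1-a_{ij}} (-1)^r B_i^{(r)}_{\bar 0\text{ or }\bar 1}\, B_j\, B_i^{(s)}_{\bar 0\text{ or }\bar 1}=0,
\]
written with $\imath$divided powers. Equivalently, it is a $q$-Serre polynomial in $B_i,B_j$ plus lower-order correction terms whose coefficients are polynomials in $\bm{c}_i q_i$. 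So I would first record that $\uqb\cong k\langle B_i\mid i\in I\rangle/(S_{ij}\mid i\neq j)$. Then $\chi$ factors through $\uqb$ if and only if $\chi(S_{ij})=0$ for all ordered pairs $i\neq j$. This reduces the theorem to a pairwise, rank-two computation.

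Next I would evaluate $\chi(S_{ij})$. Since $\chi$ is a homomorphism to the commutative field $k$, every word in $B_i,B_j$ goes to $b_i^{r+s}b_j$. Hence $\chi(S_{ij})=b_j\,P_{a_{ij}}(b_i)$ for an explicit one-variable polynomial $P_{a_{ij}}$ obtained by summing the coefficients of the $\imath$Serre relation. Because the relation is homogeneous under $B_i\mapsto -B_i$ up to parity, $P_{a_{ij}}(b_i)$ is $b_i^{\epsilon}$ times a polynomial in $b_i^2/(\bm{c}_iq_i)=\beta_i$, with $\epsilon\in\{0,1\}$ the parity of $1-a_{ij}$. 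I would compute this factor case by case for $a_{ij}\in\{0,-1,-2,-3,-4\}$; the last occurs only in affine type $A_2^{(2)}$. A cleaner uniform route is the generating-function form of the $\imath$divided powers: the commutative specialisation of $\sum_r(-1)^r B_i^{(r)}B_jB_i^{(s)}$ becomes a $q$-binomial-type product. For example, for $a_{ij}=0$ the relation $B_iB_j-B_jB_i$ is automatically satisfied. For $a_{ij}=-1$ the relation $B_i^2B_j-[2]_iB_iB_jB_i+B_jB_i^2=\bm{c}_iq_iB_j$ gives $b_j(\beta_i(2-[2]_i)-1)=0$ after dividing by $\bm{c}_iq_i$. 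I expect the general answer to be $b_j\prod_{m}(\beta_i-\gamma_m)=0$ for explicit roots $\gamma_m$ depending on $q_i$ and $a_{ij}$, up to the factor $b_i$ in odd cases.

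Finally I would match these conditions with Definition~\ref{def-split-char}. Presumably a split character description is a tuple $(\beta_i)$ such that for every $i\neq j$ with $a_{ij}\neq0$, either $\beta_j=0$ or $\beta_i$ lies in the allowed finite set. Since $b_j=0\iff\beta_j=0$, this is exactly the vanishing of all the $\chi(S_{ij})$, which gives both directions at once. The main obstacle is the middle step: obtaining the closed form of $P_{a_{ij}}$ reliably for all $a_{ij}$, including $-3$ and $-4$, where the $\imath$Serre relations are long. I would first try to use the closed formula for $\imath$divided powers of Berman--Wang / Chen--Lu--Wang and the corresponding "$q$-binomial identity" specialised at commuting variables. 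If that fails, I would fall back on direct computation from the explicit rank-two relations in the literature.
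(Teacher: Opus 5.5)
Your route is the same as the paper's. The paper also takes the $\imath$Serre presentation of Chen--Lu--Wang and reduces to rank two. It evaluates each relation under the commutative character to get a condition $b_j\,f(\beta_i)=0$, then repackages these pairwise conditions as Definition~\ref{def-split-char}. Your $a_{ij}=-1$ computation agrees with the paper's $b_jf_{-1,i}(\beta_i)=0$ up to sign.

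One prediction in your plan is wrong, and it matters for the matching step. For even $a_{ij}$, the evaluated relation is not $b_jb_i$ times a nontrivial polynomial in $\beta_i$; it vanishes identically. When $a_{ij}$ is even, the divided powers $B^{(r)}_{i,\overline{a_{ij}}+\overline{p}}$ and $B^{(s)}_{i,\overline{p}}$ have the same parity, and $r+s=1-a_{ij}$ is odd. So after applying $\chi$, the summands for $r$ and $1-a_{ij}-r$ are equal with opposite signs. This is the paper's remark that only odd $a_{ij}$ matter. With a nontrivial factor you would get constraints, e.g.\ from the cubic relation in the short generator of $\mathsf{B}_2$, that the definition does not contain. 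You therefore only need the cases $a_{ij}=-1$ and $a_{ij}=-3$, not a closed formula for all $a_{ij}$.

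For the matching you also need the short argument the paper gives. On a simple edge, the conditions $b_jf_{-1}(\beta_i)=b_if_{-1}(\beta_j)=0$ involve the same linear $f_{-1}$, and $f_{-1}(0)=1$. They are therefore equivalent to $\beta_i=\beta_j\in\{0,\text{root of } f_{-1}\}$, and this propagates along paths in $\Gamma$. The $\mathsf{G}_2$ edge works the same way, using $f_{-3,i}(0)=[3]_{q_i}^2\neq0$.

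Finally, when you carry out the $a_{ij}=-3$ evaluation you should find $b_j\bigl(A\beta_i^2+P\beta_i+[3]_{q_i}^2\bigr)$. Here $A$ and $P$ are the coefficients of $x^2$ and $-x$ in the printed $f_{-3,i}$. This agrees with the relation displayed in the proof of Lemma~\ref{lem-necessary-serre-2}, so the printed $f_{-3,i}$ has the wrong sign on its linear term.
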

\begin{proof}
    The proof is located in Section~\ref{sec-split}.
\end{proof}

In Section~\ref{sec-action}, after proving the two main theorems, we explore 
how to describe the right action of $\widehat{\uq}$, the characters of $\uq$, 
on $\widehat{\uqb}$.

And in Section~\ref{sec-integrable}, we obtain Meereboer's classification
(\cite{Mee26}) of integrable characters by assuming specialisability and
imposing that certain eigenvalues be $q$-numbers.

\section{General Definitions}
\subsection{Root System}
Let $I:= \set{1,\dots,n}$ and let $A=(a_{ij})_{i,j=1,\dots,n}$ be a 
symmetrisable Cartan matrix of finite or affine type, and let $X,Y$ be
two lattices equipped with a perfect pairing 
$\langle\cdot,\cdot\rangle: Y\times X\to\Z$ and with linearly independent
elements
\[
    \alpha_1,\dots,\alpha_n\in X,\qquad h_1,\dots,h_n\in Y
\]
such that
\[
    \forall i,j\in I:\quad \langle h_i,\alpha_j\rangle = a_{i,j}.
\]
Let there be a symmetric form $\cdot$ on $X$ such that
\[
    \forall i,j\in I:\quad \langle h_i,\alpha_j\rangle = 
    2\frac{\alpha_i\cdot\alpha_j}{\alpha_i\cdot\alpha_i}.
\]
This corresponds to a choice $(I,\cdot)$ of Cartan datum (\cite[\S1.1.1]{Lus10})
and a choice $(X,Y)$ of $X$- and $Y$-regular
root datum (\cite[\S2.2.1]{Lus10}) where the embeddings $I\to X,Y$ are
denoted by $i\mapsto\alpha_i,h_i$, respectively.

Furthermore, write $h: X\to Y\otimes_\Z\Q$ for the linear map mapping
$\lambda\in X$ to the unique element $h$ satisfying
\[
    \forall\mu\in X:\quad \lambda\cdot\mu = \langle h,\mu\rangle.
\]
In particular, for $\epsilon_i:= \frac{\alpha_i\cdot\alpha_i}{2}$ we have
\[
    h_{\alpha_i} = \epsilon_i h_i.
\]
Assume that $\epsilon_i\in\Z$, so that $h_{\alpha_i}\in Y$.

Let $W$ be the Weyl group corresponding to $A$; it acts on $X,Y$.

\subsection{Quantum Group}
Let $k$ be a field of characteristic 0 and let $q\in k^\times$ be not a 
root of 1.

For $a,b\in\Z$ and $K$ and invertible element of a $k$-algebra $A$ write
\begin{align*}
    [a]_{q^b} &:= \frac{q^{ab}-q^{-ab}}{q^b-q^{-b}},\\
    [a]_{q^b}^! &:= [a]_{q^b}[a-1]_{q^b}\cdots [1]_{q^b}\quad(a\ge0),\\
    [K;a]_{q^b} &:= \frac{Kq^{ab} - K^{-1}q^{-ab}}{q^b-q^{-b}}
\end{align*}
and $[K]_{q^b}:= [K;0]_{q^b}$ whenever that does not lead to confusion.

Write furthermore $q_i:= q^{\epsilon_i}$ for $i\in I$.

\begin{definition}
    The \emph{Drinfel'd--Jimbo quantum group} $\uq$ is the associative
    $k$-algebra generated by
    \[
        (E_i)_{i\in I},\qquad (F_i)_{i\in I},\qquad (K_h)_{h\in Y}
    \]
    subject to the following relations:
    \begin{alignat*}{2}
        K_0 &= 1 & K_{h}K_{h'}&=K_{h+h'}\\
        K_h E_i &= q^{\langle h,\alpha_i\rangle} E_i K_h &
        K_h F_i &= q^{-\langle h,\alpha_i\rangle} F_i K_h\\
        \comm{E_i}{F_j} &= \delta_{ij} [K_{\alpha_i}]_{q_i}\\
        F_{ij}(E_i,E_j) &= 0\quad (i\ne j)\qquad  &
        F_{ij}(F_i,F_j) &= 0\quad (i\ne j)
    \end{alignat*}
    where
    \[
        F_{ij}(x,y) := \sum_{p+p'=1-a_{ij}}
        \frac{(-1)^{p'}}{[p]^!_{q_i} [p']^!_{q_i}} x^p y x^{p'},
    \]
    $h,h'\in Y, i,j\in I$, and where $K_\mu$ is understood to mean
    $K_{h_\mu}$ for $\mu\in X$ with $h_\mu\in Y$.
\end{definition}

There is a well-established way (e.g. \cite[\S3.3.4]{Lus10}) of turning $\uq$
into a Hopf algebra with comultiplication $\Delta$, counit $\epsilon$,
and antipode $S$.
Sweedler notation will be used whenever appropriate.

For any $\uq$-module $V$ and $\mu\in X$ let
\[
    V_\mu:=\set{v\in V\where\forall h\in Y:\quad K_hv = q^{\langle h,\mu\rangle} v},
\]
the \emph{$\mu$-weight space}.
This is especially applicable to the case where $V=\uq$ via the adjoint action
\[
    \ad(x)(y) := x_{(1)} y S(x_{(2)})\qquad (x,y\in\uq).
\]

For $w\in W$ write $T_w$ for the automorphism $T''_{w,1}:\uq\to\uq$
from \cite[\S37.1.3,\S39.4.7]{Lus10}. It satisfies $T_w(K_h)=K_{wh}$ for
$h\in Y$.

For a subset $J\subset I$, consider the associated \emph{Levi subalgebra}
$\uq_J$, i.e. the Hopf subalgebra of $\uq$ generated by
\[
    (E_i)_{i\in J},\qquad (F_i)_{i\in J},\qquad (K_h)_{h\in Y}.
\]

\subsection{Quantum Symmetric Pairs}\label{sec-qsp}
Let $(I_\bullet,\tau)$ be a generalised Satake diagram (see \cite[\S2.2]{RV20}
for a more extensive discussion),
i.e.
\begin{enumerate}
    \item $I_\bullet\subset I$ is a subdiagram of finite type (\enquote{black nodes});
    \item $\tau$ is a diagram involution of $I$ whose restriction to $I_\bullet$
        is minus the longest element of the Weyl group of $I_\bullet$;
    \item for every $i\in I_\circ:= I\setminus I_\bullet$ (\enquote{white nodes}),
        the associated rank 1 diagram $I[i]$ (all nodes in $I_\bullet\cup\set{i,\tau(i)}$
        that are connected to $i$ or $\tau(i)$) is not of type $\mathsf{A}_2$.
\end{enumerate}
We assume in addition that there are involutions of $X,Y$ compatible with
$\tau$.
Let $w_\bullet\in W$ be the longest element of the Weyl group $W_\bullet\le W$
associated with $I_\bullet$.
Set
\begin{align*}
    \Theta&:= -w_\bullet\circ\tau\in\Aut(X),\Aut(Y),\\
    \Ins &:= \set{i\in I_\circ\where \tau(i)=i,
    \forall j\in I_\bullet: \langle h_i,\alpha_j\rangle=0},\\
    \mathcal{S}&:= \set{i\in\Ins\where\forall j\in\Ins: 2|\langle h_j,\alpha_i\rangle}.
\end{align*}
$\Ins$ can alternatively be characterised as $i\in I_\circ$ such that
$I[i]$ is of type $\mathsf{AI}_1$ and is called the \emph{split subdiagram}.
$\mathcal{S}$ contains the \emph{non-standard nodes}.

\begin{definition}
    A generalised Satake diagram $(I_\bullet,\tau)$ is said to be
    \emph{quasi-split} if $I_\bullet=\emptyset$ and \emph{split} if in addition
    $\tau=\id$.
\end{definition}

Define $Q_\uqb$ as the sublattice of $Y$ generated by
$h_{\alpha_i}$ for $i\in I_\bullet$ and $h_{\alpha_i-\alpha_{\tau(i)}}$
for $i$ such that $I[i]$ is of type $\diag(\mathsf{A}_1)$.

Write $\uq_\bullet$ for the subalgebra of $\uq$ generated by
\[
    (E_i)_{i\in I_\bullet},\quad (F_i)_{i\in I_\bullet},\quad
    (K_{\pm\alpha_i})_{i\in I_\bullet}.
\]
Let $\uqb=\uqb_{\bm{c},\bm{s}}$ be the coideal subalgebra for $(I_\bullet,\tau)$
($\imath$quantum group) with
parameters $\bm{c}=(\bm{c}_i)_{i\in I_\circ}\in (k^\times)^{I_\circ}$ and
$\bm{s}=(\bm{s}_i)_{i\in I_\circ}\in k^{I_\circ}$.
It is generated by $\uq_\bullet$, $k[Y^\Theta]$ (i.e. $K_h$ for $h\in Y^\Theta$),
and
\[
    B_i := F_i + \bm{c}_i T_{w_\bullet} (E_{\tau(i)})K_{-\alpha}
    + \bm{s}_i K_{-\alpha_i}
\]
for $i\in I_\circ$.
We assume that the parameters satisfy the following conditions:
\begin{enumerate}
    \item if $I[i]$ is of type $\diag(\mathsf{A}_1)$: $\bm{c}_i=\bm{c}_{\tau(i)}$;
    \item if $i\not\in\mathcal{S}$: $\bm{s}_i=0$.
\end{enumerate}
$(\uq,\uqb)$ is the \emph{generalised quantum symmetric pair} for the diagram
$(I_\bullet,\tau)$.

For a generalised Satake subdiagram $J\subset I$ (i.e. $\tau$-invariant subset
such that $\forall i\in J\cap I_\circ: I[i]\subset J$), let $\uqb_J$ be
the subalgebra of $\uq_J$ generated by
\[
    (E_i)_{i\in J\cap I_\bullet},\qquad
    (F_i)_{i\in J\cap I_\bullet},\qquad
    (K_h)_{h\in Y^\Theta},\qquad
    (B_i)_{i\in J\cap I_\circ}.
\]
Then $(\uq_J,\uqb_J)$ is the generalised quantum symmetric pair associated to $(J,\tau)$.

In particular, write $\uqbns$ for the \emph{split part} of $\uq$, i.e. the
subalgebra of $\uq_{\Ins}$ generated by the $(B_i)_{i\in\Ins}$.
Then $\uqb_{\Ins} \cong k[Y^\Theta]\otimes \uqbns$ as $k$-vector spaces, where
both tensor components commute with one another.
Theorem~\ref{thm-main-thm} then states that restriction to $\uqb_{\Ins}$ is
an injection whose image consists of those characters that vanish on
$K_\alpha-1$ for $\alpha\in 2Q_\uqb$.

\section{Characters of $\uq$}
We start by recapping the classification of the characters of $\uq$
(or any Levi subalgebra thereof).

\begin{definition}
    Write $Q$ for the sublattice of $Y$ generated by $\epsilon_i h_i$ for
    $i\in I$ (i.e. the image of the root lattice under the embedding
    $h$).
\end{definition}

\begin{lemma}\label{lem-U-lattice-descent}
    Let $\chi:\uq\to k$ be a character of $\uq$, and let $\xi: Y\to k^\times$ be
    given by $\xi(h) := \chi(K_h)$. Then $\xi$ is a group homomorphism that
    factors through $Y/2Q$.
\end{lemma}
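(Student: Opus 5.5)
The plan is to use multiplicativity first and then the relations of $\uq$ to pin down $\xi$ on $Q$. Since $K_hK_{h'}=K_{h+h'}$ and $K_0=1$, we get $\xi(h+h')=\xi(h)\xi(h')$ and $\xi(0)=1$. In particular $\xi(h)\xi(-h)=1$, so $\xi(h)\in k^\times$ and $\xi:Y\to k^\times$ is a group homomorphism. It then remains to show $\xi(2\epsilon_i h_i)=1$ for each $i\in I$; since these elements generate $2Q$, this gives the factorisation through $Y/2Q$.

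The key step is to show that $\chi(E_i)=\chi(F_i)=0$. Apply $\chi$ to the relation $K_hE_i=q^{\langle h,\alpha_i\rangle}E_iK_h$. Because $k$ is commutative, this gives $\xi(h)\chi(E_i)=q^{\langle h,\alpha_i\rangle}\xi(h)\chi(E_i)$, and since $\xi(h)$ is invertible,
\[
    (1-q^{\langle h,\alpha_i\rangle})\chi(E_i)=0\qquad (h\in Y).
\]
Take $h=h_i$, so that $\langle h_i,\alpha_i\rangle=a_{ii}=2$. As $q$ is not a root of unity, $q^2\neq1$, hence $\chi(E_i)=0$. The same argument gives $\chi(F_i)=0$.

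Now apply $\chi$ to $\comm{E_i}{F_i}=[K_{\alpha_i}]_{q_i}$. The left side maps to $0$ because the image of a commutator under a character vanishes. This yields
\[
    \frac{\xi(h_{\alpha_i})-\xi(h_{\alpha_i})^{-1}}{q_i-q_i^{-1}}=0 .
\]
Here $q_i-q_i^{-1}\neq0$, since $q_i^2=q^{2\epsilon_i}\neq1$ with $\epsilon_i\ge1$ (as $\alpha_i\cdot\alpha_i>0$ for a symmetrisable Cartan matrix of finite or affine type with the usual positive symmetrisation). Hence $\xi(\epsilon_ih_i)^2=1$, that is, $\xi(2\epsilon_ih_i)=1$.

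I do not expect a real obstacle. The only points needing care are the nonvanishing of $q^2-1$ and $q_i-q_i^{-1}$, both of which follow from $q$ not being a root of unity, and the identification $K_{\alpha_i}=K_{h_{\alpha_i}}=K_{\epsilon_ih_i}$, which holds by the convention fixed in the definition of $\uq$.
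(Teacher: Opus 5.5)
Your proof is correct and follows the paper's argument: multiplicativity of $\chi$ on the $K_h$ gives the homomorphism, and applying $\chi$ to $\comm{E_i}{F_i}=[K_{\alpha_i}]_{q_i}$ gives $\xi(\epsilon_ih_i)=\xi(-\epsilon_ih_i)$, i.e.\ $\xi(2\epsilon_ih_i)=1$. Your \enquote{key step} $\chi(E_i)=\chi(F_i)=0$ is correct but is never used, since, as you note yourself, $\chi$ kills the commutator simply because $k$ is commutative; the paper proves that vanishing separately, in Lemma~\ref{lem-U-weight-spaces}, for use in the injectivity argument.
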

\begin{proof}
    For $h_1,h_2\in Y$ we have
    \[
	\xi(h_1+h_2)=\chi(K_{h_1+h_2})
	= \chi(K_{h_1} K_{h_2})
	= \chi(K_{h_1})\chi(K_{h_2})
	= \xi(h_1)\xi(h_2).
    \]
    Furthermore, for each $i\in I$, applying $\chi$ to the relation
    \[
        E_iF_i - F_iE_i = [K_{\alpha_i}]_{q_i}
    \]
    of $\uq$ yields $0 = \xi(\epsilon_ih_i)-\xi(-\epsilon_ih_i)$,
    which implies that $\xi$ vanishes on $2Q$.
\end{proof}

\begin{lemma}\label{lem-U-weight-spaces}
    Let $\mu\ne0$ and let $x\in\uq_\mu$, then $\chi(x)=0$.
\end{lemma}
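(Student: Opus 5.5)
The natural tool is the torus action. For $x\in\uq_\mu$ (weight taken with respect to the adjoint action), the defining property of the weight space reads $\ad(K_h)(x)=K_hxK_{-h}=q^{\langle h,\mu\rangle}x$ for every $h\in Y$. Here we use $\Delta(K_h)=K_h\otimes K_h$ and $S(K_h)=K_{-h}$.

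Apply $\chi$ to this identity. Since $\chi$ is multiplicative and $\chi(K_h)\chi(K_{-h})=\chi(K_0)=1$, we get
\[
    \chi(x)=\chi(K_h)\chi(x)\chi(K_{-h})=q^{\langle h,\mu\rangle}\chi(x)\qquad(h\in Y).
\]
So $\bigl(1-q^{\langle h,\mu\rangle}\bigr)\chi(x)=0$ for all $h\in Y$.

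It then suffices to find one $h\in Y$ with $q^{\langle h,\mu\rangle}\ne1$. The pairing $\langle\cdot,\cdot\rangle:Y\times X\to\Z$ is perfect and $\mu\ne0$, so there is $h\in Y$ with $\langle h,\mu\rangle=m\ne0$. Because $q$ is not a root of unity, $q^m\ne1$, and hence $\chi(x)=0$.

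The only point needing care is that $\uq_\mu$ is the weight space for the adjoint action. That action is genuinely conjugation by $K_h$, which the character cannot detect. Beyond this bookkeeping, and the appeal to perfectness of the pairing together with $q$ not being a root of unity, I expect no real obstacle; none of the $E$/$F$ relations or Lemma~\ref{lem-U-lattice-descent} are needed.
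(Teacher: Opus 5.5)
Your proof is correct and is essentially the paper's argument: conjugate by $K_h$, apply $\chi$ to obtain $(1-q^{\langle h,\mu\rangle})\chi(x)=0$, and choose $h$ with $\langle h,\mu\rangle\neq0$, using non-degeneracy of the pairing and that $q$ is not a root of unity. The paper's text writes ``$\langle h,\mu\rangle=0$'' and concludes ``$x=0$''; these are typos for $\langle h,\mu\rangle\neq0$ and $\chi(x)=0$, which your version states correctly.
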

\begin{proof}
    Let $h\in Y$ with $\langle h,\mu\rangle=0$ (exists, as the pairing of
    $X,Y$ is non-degenerate), then
    \[
	K_h x K_h^{-1} = q^{\langle h,\mu\rangle} x.
    \]
    Applying $\chi$ and reordering yields
    $(1-q^{\langle h,\mu\rangle}) \chi(x)=0$,
    which implies $x=0$.
\end{proof}

\begin{proposition}\label{prop-char-U}
    The map $\chi\mapsto\xi$ where $\xi(h) := \chi(K_h)$ ($h\in Y$) establishes
    a bijection between the sets of characters of $\uq$ on one hand, and the
    set of characters of the Abelian group $Y/2Q$ on the other hand.
\end{proposition}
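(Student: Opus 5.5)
The plan is to establish injectivity and surjectivity of $\chi\mapsto\xi$ separately, using Lemma~\ref{lem-U-lattice-descent} and Lemma~\ref{lem-U-weight-spaces}.

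\emph{Well-definedness and injectivity.} By Lemma~\ref{lem-U-lattice-descent}, $\xi$ is a group homomorphism $Y\to k^\times$ that vanishes on $2Q$, so it defines a character of $Y/2Q$. For injectivity, note that $E_i\in\uq_{\alpha_i}$ and $F_i\in\uq_{-\alpha_i}$ for the adjoint action. The weight $\pm\alpha_i$ is nonzero, so Lemma~\ref{lem-U-weight-spaces} gives $\chi(E_i)=\chi(F_i)=0$. (Directly: choose $h$ with $\langle h,\alpha_i\rangle\neq0$, apply $\chi$ to $K_hE_iK_{-h}=q^{\langle h,\alpha_i\rangle}E_i$, and use that $q$ is not a root of unity.) Since $\uq$ is generated by the $E_i$, $F_i$ and $K_h$, the character $\chi$ is determined by its values on the $K_h$, that is, by $\xi$.

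\emph{Surjectivity.} Given a homomorphism $\xi:Y\to k^\times$ trivial on $2Q$, define $\chi$ on the free algebra on the generators by $E_i,F_i\mapsto0$ and $K_h\mapsto\xi(h)$. We then check that every defining relation of $\uq$ is sent to $0$:
\begin{itemize}
\item $K_0=1$ and $K_hK_{h'}=K_{h+h'}$ hold because $\xi$ is a homomorphism.
\item The relations $K_hE_i=q^{\langle h,\alpha_i\rangle}E_iK_h$ and the analogous ones for $F_i$ hold trivially, since both sides map to $0$.
\item The Serre relations $F_{ij}(E_i,E_j)$ and $F_{ij}(F_i,F_j)$ are sums of monomials each containing $E$'s or $F$'s, so they also map to $0$.
\item $[E_i,F_j]=\delta_{ij}[K_{\alpha_i}]_{q_i}$ requires $\xi(\epsilon_ih_i)-\xi(-\epsilon_ih_i)=0$. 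This holds because $\xi(2\epsilon_ih_i)=1$, which is exactly the condition that $\xi$ is trivial on $2Q$ (recall $h_{\alpha_i}=\epsilon_ih_i$).
\end{itemize}
Hence $\chi$ descends to a character of $\uq$ whose associated $\xi$ is the given one.

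The only step needing care is the commutator relation in the surjectivity argument. There one must match the generators $\epsilon_ih_i$ of $Q$ with $K_{\alpha_i}$, and note that the identity $\xi(h)=\xi(-h)$ for $h\in Q$ is equivalent to $\xi(2h)=1$. Everything else is routine.
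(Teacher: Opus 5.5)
Your proof is correct and follows the same route as the paper: well-definedness from Lemma~\ref{lem-U-lattice-descent}, injectivity because Lemma~\ref{lem-U-weight-spaces} forces $\chi(E_i)=\chi(F_i)=0$, and surjectivity by sending $K_h\mapsto\xi(h)$, $E_i,F_i\mapsto 0$ and checking the relations. Your relation-by-relation check, in particular that the commutator relation is exactly the condition $\xi(2\epsilon_ih_i)=1$, and your direct choice of $h$ with $\langle h,\alpha_i\rangle\neq0$ (the correct condition, where the paper's lemma has the typo ``$=0$'') just spell out what the paper leaves implicit.
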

\begin{proof}
    For well-definedness note that by Lemma~\ref{lem-U-lattice-descent},
    any $\xi$ obtained by restricting a character acts trivially on $2Q$.

    For injectivity, let $\chi,\chi'$ be two characters of $\uq$ that
    restrict to the same character $\xi$ of $Y/2Q$.
    By Lemma~\ref{lem-U-weight-spaces}, they both map $E_i,F_i$ ($i\in I$)
    to 0.
    Since $\uq$ is generated by $K_h, E_i, F_i$ ($h\in Y, i\in I$), we
    have $\chi=\chi'$.

    For surjectivity, given $\xi$, we define
    \[
	\chi(K_h) := \xi(h),\qquad \chi(E_i):= 0,\qquad \chi(F_i):= 0
    \]
    and note that all relations between these generators of $\uq$ are mapped
    to 0.
\end{proof}

\section{Proof of Theorem~\ref{thm-main-thm}}\label{sec-proof}
We start by proving well-definedness.
\begin{lemma}\label{lem-xi-factors}
    Let $\chi\in\widehat{\uqb}$, let $\xi: Y^\Theta\to k^\times$ be defined
    by having $h\mapsto \chi(K_h)$.
    Then $\xi$ induces a group homomorphism $Y^\Theta/2Q_\uqb\to k^\times$.
\end{lemma}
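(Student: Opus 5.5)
Multiplicativity of $\xi$ follows exactly as in Lemma~\ref{lem-U-lattice-descent}, since $K_{h}K_{h'}=K_{h+h'}$ also holds in $\uqb$. It therefore suffices to show $\xi(2g)=1$ for each generator $g$ of $Q_\uqb$. We first check that these generators actually lie in $Y^\Theta$. For $i\in I_\bullet$ we have $\Theta(\alpha_i)=\alpha_i$, because $-w_\bullet\tau$ fixes the roots of $I_\bullet$ by axiom (2) of a generalised Satake diagram. For $I[i]$ of type $\diag(\mathsf{A}_1)$, the nodes $i,\tau(i)$ are not connected to $I_\bullet$, so $\Theta(\alpha_i)=-\alpha_{\tau(i)}$ and hence $\Theta(\alpha_i-\alpha_{\tau(i)})=\alpha_i-\alpha_{\tau(i)}$. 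We then treat the two kinds of generators separately.

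\emph{Black nodes.} For $i\in I_\bullet$, the elements $E_i,F_i,K_{\pm\alpha_i}$ lie in $\uq_\bullet\subset\uqb$. Applying $\chi$ to $[E_i,F_i]=[K_{\alpha_i}]_{q_i}$ gives $\xi(h_{\alpha_i})=\xi(-h_{\alpha_i})$, so $\xi(2h_{\alpha_i})=1$, exactly as in Lemma~\ref{lem-U-lattice-descent}.

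\emph{Nodes of type $\diag(\mathsf{A}_1)$.} Let $j=\tau(i)\neq i$ with $a_{ij}=0$ and no black neighbours. Then $w_\bullet$ acts trivially on $E_j$, so
\[
B_i=F_i+\bm{c}_iE_jK_{-\alpha_i},\qquad B_j=F_j+\bm{c}_jE_iK_{-\alpha_j},
\]
with $\bm{c}_i=\bm{c}_j$ (the $\bm s$-terms vanish, since $i\notin\mathcal S$). The plan is to compute the commutator $[B_i,B_j]$ in $\uq$. The terms $[F_i,F_j]$ and $[E_jK_{-\alpha_i},E_iK_{-\alpha_j}]$ vanish because $a_{ij}=0$. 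Since $K_{-\alpha_i}$ commutes with $F_j$ and $E_j$ (again by $a_{ij}=0$), the cross terms reduce to $\bm{c}_j[F_i,E_iK_{-\alpha_j}]+\bm{c}_i[E_jK_{-\alpha_i},F_j]$. A short computation then gives
\[
[B_i,B_j]=\bm{c}\,\frac{K_{\alpha_j-\alpha_i}-K_{\alpha_i-\alpha_j}}{q_i-q_i^{-1}}
\]
up to sign conventions, using $q_i=q_j$ and $\bm{c}_i=\bm{c}_j=\bm c$. This is the standard relation in $\diag(\mathsf{A}_1)$ $\imath$quantum groups. Applying $\chi$, the left-hand side maps to $0$ because $\chi$ is a character into the commutative field $k$. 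Since $\bm c\neq0$, we obtain $\xi(h_{\alpha_i-\alpha_j})=\xi(-h_{\alpha_i-\alpha_j})$, i.e.\ $\xi(2h_{\alpha_i-\alpha_j})=1$.

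The main obstacle is the commutator computation: one must track the Cartan factors $K_{-\alpha}$ correctly to confirm that the result is a nonzero multiple of $[K_{\alpha_j-\alpha_i}]$ rather than something that vanishes. In particular, the notation $K_{-\alpha}$ in the definition of $B_i$ should be read as $K_{-\alpha_i}$, or more precisely as $K_{-\alpha_{\tau(i)}}$ composed with the $w_\bullet$-twist; either reading gives the same outcome here, because $w_\bullet$ acts trivially in this case. Combining the two cases, $\xi$ is trivial on $2Q_\uqb$ and so descends to $Y^\Theta/2Q_\uqb$.
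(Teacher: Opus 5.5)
Your proof is correct and follows the paper's argument: multiplicativity, the relation $[E_i,F_i]=[K_{\alpha_i}]_{q_i}$ for black nodes, and for $\diag(\mathsf{A}_1)$ nodes applying $\chi$ to $[B_i,B_{\tau(i)}]=\bm{c}_i[\mathcal{Z}_i]_{q_i}$ with $\mathcal{Z}_i=K_{\tau(i)}K_i^{-1}$. The paper quotes that relation from Kolb, whereas you verify it by direct computation, and your formula, including its sign, is right. The one quibble is your aside on $K_{-\alpha}$: the correct reading is $K_{-\alpha_i}$, as in the paper's proof, and the two readings are not interchangeable, because if it is read literally as $K_{-\alpha_{\tau(i)}}$ the cross terms no longer collapse to Cartan elements.
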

\begin{proof}
    Note that
    \[
        \xi(h_1+h_2) = \chi(K_{h_1+h_2})
        = \chi(K_{h_1})\chi(K_{h_2})
        = \xi(h_1)\xi(h_2)
    \]
    for $h_1,h_2\in Y^\Theta$,
    so $\xi$ is indeed a group homomorphism.

    Let $i\in I_\bullet$, then as in the proof of Lemma~\ref{lem-U-lattice-descent} we have
    \[
	    \xi(2\epsilon_ih_i)=1.
    \]
    Let now $i\in I_\circ$ such that $I[i]$ is of type $\diag(\mathsf{A}_1)$,
    then
    \[
        B_i = F_i + c_i E_{\tau(i)} K_i^{-1},\qquad
        B_{\tau(i)} = F_{\tau(i)} + c_i E_i K_{\tau(i)}^{-1}.
    \]
    Furthermore, we have
    \[
        \Delta(B_i) = B_i\otimes K_i^{-1} + 1\otimes F_i + c_i K_{\tau(i)} K_i^{-1}\otimes E_{\tau(i)} K_i^{-1}
    \]
    (analogously for $\Delta(B_{\tau(i)})$), so by \cite[Equation~(7.5)]{Kol14}
    we have $\mathcal{Z}_i = K_{\tau(i)}K_i^{-1}$.
    Then we have
    \[
	\comm{B_i}{B_{\tau(i)}} = c_i [\mathcal{Z}_i]_{q_i}.
    \]
    Applying $\chi$ and multiplying by a suitable nonzero scalar yields
    $0 = \xi(\alpha_i-\alpha_{\tau(i)})-\xi(\alpha_{\tau(i)}-\alpha_i)$,
    which shows that $\xi(2\alpha_i-2\alpha_{\tau(i)})=1$.
\end{proof}

The next statement all but shows injectivity.
\begin{lemma}\label{lem-stuff-mapped-to-0}
    Let $\chi\in\widehat{\uqb}$.
    \begin{enumerate}
        \item For $i\in I_\bullet$ we have
            $\chi(E_i)=\chi(F_i)=0$.
        \item If $i\in I_\circ$ such that $I[i]$ is not quasi-split
            (i.e. has black dots), then $\chi(B_i)=0$.
        \item If $i\in I_\circ$ such that $I[i]$ is quasi-split but not split
            (i.e. $\tau|_{I[i]}$ is non-trivial), then $\chi(B_i)=0$.
    \end{enumerate}
\end{lemma}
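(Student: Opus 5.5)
The plan is to use the torus $Y^\Theta$ and $\uq_\bullet$ inside $\uqb$ as a grading device, in the spirit of Lemma~\ref{lem-U-weight-spaces}, and then treat each of the three parts in turn.

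\emph{Part (1).} For $i\in I_\bullet$ the elements $E_i,F_i,K_{\pm\alpha_j}$ ($j\in I_\bullet$) generate $\uq_\bullet$, a copy of a quantum group of finite type. Restricting $\chi$ gives a character of $\uq_\bullet$. Conjugating by $K_{\alpha_j}$ multiplies $E_i$ by $q^{\pm\epsilon_j a_{ji}}$. Since $I_\bullet$ is of finite type, the symmetrised Cartan matrix of $I_\bullet$ is nondegenerate, so some $j\in I_\bullet$ has $a_{ji}\neq 0$ (take $j=i$). Hence $(1-q^{2\epsilon_i})\chi(E_i)=0$, which gives $\chi(E_i)=0$. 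The same argument gives $\chi(F_i)=0$. This is exactly the argument of Lemma~\ref{lem-U-weight-spaces}, applied inside $\uq_\bullet\subset\uqb$.

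\emph{Part (2).} Let $i\in I_\circ$ with $I[i]\cap I_\bullet\neq\emptyset$. Then some $j\in I_\bullet$ is connected to $i$ or $\tau(i)$. I will use the standard relation in $\uqb$ (cf.\ \cite{Kol14}) that $B_i$ generates, under the adjoint action of $\uq_\bullet$, a finite-dimensional module. Concretely, I expect the following relations for $j\in I_\bullet$:
\begin{itemize}
\item $K_{\alpha_j}B_iK_{\alpha_j}^{-1}=q^{-\alpha_j\cdot\alpha_i}B_i$ holds exactly when $\langle h_j,\alpha_i\rangle=\langle h_j,\Theta\alpha_i\rangle$-type compatibility holds. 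More robustly, $F_i$ and $T_{w_\bullet}(E_{\tau(i)})K_{-\alpha_i}$ have $\uq_\bullet$-weights $-\alpha_i$ and $-\alpha_i$ restricted to $I_\bullet$, because $T_{w_\bullet}(E_{\tau(i)})$ has weight $w_\bullet\alpha_{\tau(i)}$, and $\Theta$-invariance makes the $I_\bullet$-parts agree.
\item Thus $B_i$ is a $K_{\alpha_j}$-weight vector of weight $-\langle h_{\alpha_j},\alpha_i\rangle$. The case $j$ adjacent to $\tau(i)$ but not $i$ reduces to this one by symmetry of $I[i]$ under $\tau$ and $w_\bullet$.
\end{itemize}
If $\langle h_j,\alpha_i\rangle\neq0$ for some $j\in I_\bullet$, applying $\chi$ to the conjugation relation gives $\chi(B_i)=0$. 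The sign and nonvanishing of this weight is the point I would check most carefully. If the $s_i$-term appears, it has the same weight since $\bm{s}_i=0$ outside $\mathcal S\subset\Ins$.

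\emph{Part (3).} Here $I_\bullet\cap I[i]=\emptyset$ and $\tau(i)\neq i$, so $I[i]$ is of type $\diag(\mathsf{A}_1)$ or the $\mathsf{A}_2$-type excluded by the definition. In fact $I[i]$ is $\{i,\tau(i)\}$ with $a_{i\tau(i)}=0$, i.e.\ type $\diag(\mathsf A_1)$. Then $h:=h_{\alpha_i-\alpha_{\tau(i)}}\in Y^\Theta$ up to sign conventions, since $\Theta=-\tau$ on these nodes. We have $K_hB_iK_h^{-1}=q^{-\langle h,\alpha_i\rangle}B_i$, and $\langle h,\alpha_i\rangle=\alpha_i\cdot\alpha_i-\alpha_{\tau(i)}\cdot\alpha_i=2\epsilon_i\neq0$. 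Applying $\chi$ yields $(1-q^{-2\epsilon_i})\chi(B_i)=0$, so $\chi(B_i)=0$.

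The main obstacle is part (2) when the weight computation degenerates. This happens when $i$ is linked only to black nodes through $\tau(i)$, or when the $I_\bullet$-weight of $B_i$ pairs trivially with every $h_{\alpha_j}$. In that case I would instead apply $\chi$ to the relation expressing $\ad(F_j)(B_i)$ or $[E_j,B_i]$ in terms of $B_i$ and part (1)'s vanishing generators, e.g.\ $\ad(E_j)^{1-a_{ji}}B_i=0$. Alternatively I would use $\comm{E_j}{B_i}$, which lies in the span of $E_j$-multiples, to derive a linear relation forcing $\chi(B_i)=0$.
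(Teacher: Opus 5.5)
Your strategy matches the paper's. In each case you conjugate by a torus element of $\uqb$ ($K_{\alpha_j}$ with $j\in I_\bullet$, or $K_h$ with $h\in Y^\Theta$) under which the generator is an eigenvector with eigenvalue $q^a$, $a\neq0$, and then use that $q$ is not a root of unity.

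Parts (1) and (2) are correct in substance. In (2) you justify two points the paper leaves implicit:
\begin{enumerate}
\item $F_i$ and $T_{w_\bullet}(E_{\tau(i)})K_{-\alpha_i}$ have the same $K_{\alpha_j}$-eigenvalue because $h_j\in Y^\Theta$;
\item $\tau$-symmetry turns a black node adjacent to $\tau(i)$ into one adjacent to $i$.
\end{enumerate}
Your closing ``main obstacle'' paragraph, however, worries about a case that cannot occur. If $a_{ji}=0$ for all $j\in I_\bullet$, your own symmetry remark gives $I[i]\cap I_\bullet=\emptyset$, contrary to hypothesis. So the vague fallbacks via $\ad(E_j)$ or $\comm{E_j}{B_i}$ should simply be dropped. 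Also, $\bm s_iK_{-\alpha_i}$ does not ``have the same weight'' (its weight is $0$); it is absent because $i\notin\Ins$.

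The genuine error is in part (3). You claim $I[i]=\{i,\tau(i)\}$ must have $a_{i\tau(i)}=0$, because the alternative is ``the $\mathsf A_2$-type excluded by the definition''. That is not justified, since condition (3) only rules out $I[i]$ of type $\mathsf A_2$. For instance, take affine type $\mathsf A_1^{(1)}$ with $I_\bullet=\emptyset$ and $\tau$ exchanging the two nodes. Then $I[i]=\{i,\tau(i)\}$ is quasi-split but not split, and $a_{i\tau(i)}=-2$. This is why the paper's proof allows $\langle h_{\tau(i)},\alpha_i\rangle\le0$ rather than $=0$.

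Consequently your value $\langle h,\alpha_i\rangle=2\epsilon_i$ is wrong in general. You also never check that $B_i$ remains an eigenvector when $a_{i\tau(i)}\neq0$. Both are easily repaired with your own element $h=\epsilon_i(h_i-h_{\tau(i)})\in Y^\Theta$:
\begin{itemize}
\item The exponents for $F_i$ and for $E_{\tau(i)}K_{-\alpha_i}$ are $-\epsilon_i(2-a_{\tau(i)i})$ and $\epsilon_i(a_{i\tau(i)}-2)$.
\item These coincide because $a_{i\tau(i)}=a_{\tau(i)i}$ for a diagram involution.
\item Hence $K_hB_iK_{-h}=q^{-\epsilon_i(2-a_{\tau(i)i})}B_i$ with $2-a_{\tau(i)i}\ge2$.
\end{itemize}
Then $\chi(B_i)=0$ follows exactly as with the paper's choice $h=h_i-h_{\tau(i)}$.
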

\begin{proof}
    For every element $x\in\uqb$ we are going to find $h\in Y^\Theta$ such
    that $K_h x K_{-h} = q^a x$ for $a\ne0$:
    \begin{enumerate}
        \item Take $h:=h_i\in Y^\Theta$, then $a=\pm 2$.
        \item In this case there exists $j\in I_\bullet$ such that $\langle h_j,\alpha_i\rangle\ne0$,
        then $h:=h_j\in Y^\Theta$ and $a=-\langle h_j,\alpha_i\rangle$.
        \item In this case take $h:= h_i-h_{\tau(i)}$ (non-zero, as
        $i\ne\tau(i)$), then $a=-2+\langle h_{\tau(i)},\alpha_i\rangle$
        (non-zero as $\langle h_{\tau(i)},\alpha_i\rangle\le0$).
    \end{enumerate}
    Applying $\chi$ to this equation then gives us
    \[
        (1-q^a)\chi(x) = 0.
    \]
    As we assume that $q$ is not a root of unity, we conclude $\chi(x)=0$.
\end{proof}

Next, we want to prove surjectivity, which involves proving Serre-like
relations.
These often involve an element $\mathcal{Z}_i$.

\begin{definition}
    For $i\in I_\circ$, define $\mathcal{Z}_i\in \uqb_{I_\bullet}$
    as
    \[
        \mathcal{Z}_i:= \ad(x)(K_{2\alpha_{\tau(i)}})K_{-\alpha_i-\alpha_{\tau(i)}},
    \]
    where $x\in\uq_\bullet$ such that 
    $T_{w_\bullet}(E_{\tau(i)})=\ad(x)(E_{\tau(i)})$.
\end{definition}

\begin{lemma}\label{lem-Z-annihilate}
    If $I[i]$ is not quasi-split (i.e. has black nodes), then any character
    of $\uqb_{I_\bullet}$ annihilates $\mathcal{Z}_i$.
\end{lemma}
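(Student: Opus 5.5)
The plan is to repeat the weight argument from the proof of Lemma~\ref{lem-stuff-mapped-to-0}. I will show that $\mathcal{Z}_i$ is a weight vector for the adjoint action of the torus $\{K_h \mid h\in Y^\Theta\}$, with a weight that some $h\in Y^\Theta$ detects non-trivially. Then for any character $\chi$ of $\uqb_{I_\bullet}$, applying $\chi$ to $K_h\mathcal{Z}_iK_{-h}=q^a\mathcal{Z}_i$ with $a\ne0$ gives $(1-q^a)\chi(\mathcal{Z}_i)=0$, hence $\chi(\mathcal{Z}_i)=0$, because $q$ is not a root of unity.

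\emph{Step 1: choose $x$ homogeneous.} Since $T_{w_\bullet}(E_{\tau(i)})$ is a weight vector of weight $w_\bullet\alpha_{\tau(i)}$, I would replace $x$ by its component in $(\uq_\bullet)_\mu$ with
\[
\mu := w_\bullet\alpha_{\tau(i)}-\alpha_{\tau(i)} .
\]
This is allowed because $\ad$ is compatible with the weight grading: $\ad((\uq_\bullet)_\nu)(\uq_\lambda)\subset\uq_{\lambda+\nu}$. Comparing weight components of $T_{w_\bullet}(E_{\tau(i)})=\ad(x)(E_{\tau(i)})$ shows that only the $\mu$-component of $x$ contributes. (If the definition of $\mathcal{Z}_i$ is meant to be independent of the choice of $x$, this choice is harmless. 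Otherwise I would note that in the standard construction $x$ is already a product of divided powers $E_j^{(a)}$, $j\in I_\bullet$, and hence homogeneous of weight $\mu$.)

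With $x$ homogeneous, $\ad(x)(K_{2\alpha_{\tau(i)}})$ has weight $\mu$, because $K_{2\alpha_{\tau(i)}}$ has weight $0$. Multiplying by $K_{-\alpha_i-\alpha_{\tau(i)}}$ does not change the weight. So $K_h\mathcal{Z}_iK_{-h}=q^{\langle h,\mu\rangle}\mathcal{Z}_i$ for all $h\in Y$.

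\emph{Step 2: $\mu\neq0$ and $\mu\in\sum_{j\in I_\bullet}\Z\alpha_j$.} Since $I[i]$ has black nodes, some $j\in I_\bullet$ is connected to $\tau(i)$ (the diagram $I[i]$ is $\tau$-stable), so $\langle h_j,\alpha_{\tau(i)}\rangle<0$. Hence $\alpha_{\tau(i)}$ is not fixed by $W_\bullet$, and $w_\bullet\alpha_{\tau(i)}-\alpha_{\tau(i)}$ is a nonzero non-negative combination of $\alpha_j$, $j\in I_\bullet$. This step is the one that genuinely uses the hypothesis.

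\emph{Step 3: a detecting element of $Y^\Theta$.} The Cartan matrix of $I_\bullet$ is of finite type and hence nondegenerate. So there is $j\in I_\bullet$ with $a:=\langle h_j,\mu\rangle\ne0$. Moreover $h_j\in Y^\Theta$, since $w_\bullet h_{\tau(j)}=-h_j$ by the defining property of $\tau$ on $I_\bullet$, so $\Theta h_j = -w_\bullet h_{\tau(j)}=h_j$; this was already used in Lemma~\ref{lem-stuff-mapped-to-0}. Applying $\chi$ to $K_{h_j}\mathcal{Z}_iK_{-h_j}=q^a\mathcal{Z}_i$ concludes the proof.

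I expect the main obstacle to be Step 1, namely making sure that $\mathcal{Z}_i$ is really a weight vector of the expected weight $\mu$. The weight bookkeeping is immediate once $x$ is homogeneous, but I would verify it against the conventions for $\ad$ and $T_{w_\bullet}$ from \cite{Lus10}. Steps 2 and 3 are routine root-system facts.
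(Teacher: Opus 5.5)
Your proof is correct and follows the paper's argument. $\mathcal{Z}_i$ has the nonzero weight $w_\bullet\alpha_{\tau(i)}-\alpha_{\tau(i)}$, and conjugating by a suitable $K_h\in\uqb_{I_\bullet}$ forces every character to vanish on it. You simply make explicit what the paper compresses into ``$\uqb_{I_\bullet}$ is a quantum group, so Lemma~\ref{lem-U-weight-spaces} applies'': the choice of a homogeneous $x$, and a detecting element $h_j\in Y^\Theta$ with $j\in I_\bullet$, obtained from the nondegeneracy of the finite-type Cartan matrix of $I_\bullet$.
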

\begin{proof}
    We have $w_\bullet(\alpha_{\tau(i)})\ne \alpha_{\tau(i)}$, so that
    $w_\bullet(\alpha_{\tau(i)})-\alpha_{\tau(i)}$ is a nonzero weight.
    It just so happens that $\mathcal{Z}_i$ has that weight.
    By Lemma~\ref{lem-U-weight-spaces}, every character of
    $\uqb_{I_\bullet}$ therefore has to annihilate $\mathcal{Z}_i$.
    Note that since $I_\bullet$ has no white nodes, and since
    $Y^\Theta$ contains $h_i$ for $i\in I_\bullet$, $\uqb_{I_\bullet}$ is
    a quantum group, so the lemma applies.
\end{proof}

\begin{proposition}\label{prop-surj}
    Let $\xi: Y^\Theta/2Q_\uqb\to k^\times$ be a group homomorphism,
    and $\eta\in\widehat{\uqbns}$.
    There is a character $\chi\in\widehat{\uqb}$ that restricts to $\xi$ and
    $\eta$.
\end{proposition}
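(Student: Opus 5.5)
We define $\chi$ on the generators of $\uqb$ and check that every defining relation of $\uqb$ is sent to zero. The assignment is forced by Lemma~\ref{lem-stuff-mapped-to-0}:
\[
    \chi(K_h):=\xi(h)\ (h\in Y^\Theta),\qquad \chi(E_i):=\chi(F_i):=0\ (i\in I_\bullet),
\]
\[
    \chi(B_i):=0\ (i\in I_\circ\setminus\Ins),\qquad \chi(B_i):=\eta(B_i)\ (i\in\Ins).
\]
We also need $\chi(K_{\pm\alpha_i})$ for $i\in I_\bullet$, which is $\xi(\pm\epsilon_ih_i)$. Since $h_{\alpha_i}\in Y^\Theta$ for $i\in I_\bullet$, this is consistent.

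For the list of relations we use the presentation of $\uqb$ by generators and relations from \cite{Kol14} (and \cite{RV20} in the generalised setting). Its relations are:
\begin{enumerate}
    \item the relations of $\uq_\bullet$ and of $k[Y^\Theta]$;
    \item the commutation relations $K_hB_iK_{-h}=q^{-\langle h,\alpha_i\rangle}B_i$;
    \item the relations $[E_j,B_i]=0=[F_j,B_i]$ for $j\in I_\bullet$, $i\in I_\circ$;
    \item the relations $[B_i,B_{\tau(i)}]=\text{(expression in }\mathcal{Z}_i)$ for $\tau(i)\neq i$, or the corresponding relation for $I[i]$ of type $\diag(\mathsf{A}_1)$;
    \item the inhomogeneous quantum Serre relations $F_{ij}(B_i,B_j)=C_{ij}$, where the lower-order terms $C_{ij}$ lie in $\uqb_{I_\bullet}$-span of products of fewer $B$'s.
\end{enumerate}

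We verify these in order.

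For (1), we argue as in Proposition~\ref{prop-char-U}: $\uq_\bullet$ together with $k[Y^\Theta]$ forms a quantum-group-like algebra, and its relations vanish because $\xi$ kills $2Q_\uqb\supseteq 2\epsilon_ih_i$ for $i\in I_\bullet$.

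For (2), if $\chi(B_i)\neq 0$ then $i\in\Ins$, so $\tau(i)=i$ and $I[i]$ is of type $\mathsf{AI}_1$. We want $\langle h,\alpha_i\rangle=0$ for $h\in Y^\Theta$. For this we write $h=\Theta h$; since $\alpha_i$ is orthogonal to $I_\bullet$, $w_\bullet$ fixes it, and hence $\Theta$ acts on $\alpha_i$ as $-1$. Then $\langle h,\alpha_i\rangle=\langle\Theta h,\Theta\alpha_i\rangle=-\langle h,\alpha_i\rangle$, so it vanishes.

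For (3), both sides map to $0$ because $\chi(E_j)=\chi(F_j)=0$.

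For (4), the $\diag(\mathsf{A}_1)$ case gives $c_i[\mathcal{Z}_i]_{q_i}$ with $\mathcal{Z}_i=K_{\tau(i)}K_i^{-1}$. The left side maps to $0$, and the right side maps to $0$ because $\xi(2(\alpha_i-\alpha_{\tau(i)}))=1$. When $I[i]$ has black nodes, Lemma~\ref{lem-Z-annihilate} kills $\mathcal{Z}_i$.

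The hardest part is (5), the Serre-type relations. We split into cases according to whether $i,j$ lie in $\Ins$.
\begin{itemize}
    \item If both $i,j\in\Ins$, the relation lives in $\uqb_{\Ins}\cong k[Y^\Theta]\otimes\uqbns$. The relation restricted to the split part is a relation of $\uqbns$, which $\eta$ respects. Any $K_h$ factors occurring have $h\in Y^\Theta$ that pair trivially with the relevant weights, and $\chi$ then just multiplies by $\xi(h)$. The subtle point is the $\mathcal{Z}_i$ factors, which in the split case are $K$'s of weight $0$ in $\uqbns$ after normalising, so compatibility holds.
    \item If, say, $j\notin\Ins$, then every monomial on the left contains $B_j$ and maps to $0$. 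For the right-hand side $C_{ij}$, we first try a weight argument: every term of $C_{ij}$ has nonzero $Y^\Theta$-weight (the same nonzero weight as $B_j$ under some $K_h$ from Lemma~\ref{lem-stuff-mapped-to-0}). The conjugation argument of that lemma then forces $\chi(C_{ij})=0$ automatically, since $\chi(K_h)$ is a scalar and $C_{ij}$ is a weight vector for $\ad K_h$ with weight $q^a\neq1$.
\end{itemize}
This conjugation trick in fact works uniformly for every relation that is homogeneous of nonzero $K_h$-weight: applying $\chi$ to $K_hRK_{-h}=q^aR$ forces $\chi(R)=0$. So we only need to scrutinise relations of weight zero, and those involve only $\uqb_{\Ins}$ and $\uqb_{I_\bullet}$. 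This is how we will organise the check.
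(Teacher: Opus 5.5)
Your argument is correct in substance, but it takes a different route from the paper.

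\textbf{How the two routes differ.} The paper never inspects the relations internal to the non-split part. It gets the values on $\uqb_{I\setminus\Ins}$ by restricting a character $\chi'$ of the Levi subalgebra $\uq_{I\setminus\Ins}$ (via Proposition~\ref{prop-char-U}), glues this with $\eta$ on the free product, and then checks only the cross Serre relations between $I_\circ\setminus\Ins$ and $\Ins$. There it reads off $\chi(C_{ij})\propto\chi(\mathcal{Z}_i)$ or $\propto\chi(B_j)$ from Kolb's formulas. You instead define $\chi$ directly on generators and check all of Kolb's relations. A single $Y^\Theta$-weight argument disposes of every relation of nonzero weight. Only three kinds of relation then need an actual computation: those of $\uq_\bullet$ and $k[Y^\Theta]$, the $\diag(\mathsf{A}_1)$ commutators, and the split--split Serre relations.

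\textbf{What each route buys.} The paper's route avoids the weight bookkeeping, but it needs $\xi$ to be the restriction of a character of $Y/2Q'$, in particular trivial on $Y^\Theta\cap 2Q'$. The inclusion $2Q'\le 2Q_\uqb$ quoted there fails in general. Take $\mathsf{AIII}$ on $\mathsf{A}_3$ with $I_\bullet=\{2\}$ and $\tau$ swapping $1,3$. Then $\Ins=\emptyset$, $h_1-h_3\in Y^\Theta$ and $Q_\uqb=\Z h_2$. So a $\xi$ with $\xi(2h_1-2h_3)\ne1$ is allowed, yet it cannot come from any character of $\uq$. Your construction never extends $\xi$ beyond $Y^\Theta$ and uses exactly the condition $\xi|_{2Q_\uqb}=1$, so it covers these cases as well. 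For your item (1), a clean way to get the character on $\uq_\bullet$ and $k[Y^\Theta]$ is to restrict one of $\uq_{I_\bullet}$. This extension is unproblematic: $Y^\Theta$ is a direct summand of $Y$ containing all $h_{\alpha_k}$, $k\in I_\bullet$.

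\textbf{Statements to correct.}
\begin{enumerate}
\item The $Y^\Theta$-weight of $F_{ij}(B_i,B_j)=C_{ij}$ is that of $B_i^{1-a_{ij}}B_j$, i.e.\ $h\mapsto q^{-\langle h,(1-a_{ij})\alpha_i+\alpha_j\rangle}$. It is not the weight of $B_j$; for $i\notin\Ins$, $j\in\Ins$ the weight of $B_j$ is trivial.
\item This weight is trivial on $Y^\Theta$ iff $w_\bullet\tau$ fixes $(1-a_{ij})\alpha_i+\alpha_j$. Since $w_\bullet\alpha_k\in\alpha_k+\sum_{l\in I_\bullet}\N_0\alpha_l$, with equality iff $k$ has no black neighbour, this happens only in two cases: $i,j\in\Ins$, or $j=\tau(i)$ with $a_{ij}=0$ and no black neighbours. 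The second case is a weight-zero relation involving non-split $B$'s, so your closing claim must include it. Your item (4) already handles it correctly.
\item The conjugation trick needs Kolb's $C_{ij}$ to be homogeneous as an expression. You should verify this on his formulas: they are built from $B_i$, $B_j$ and $\mathcal{Z}_i$, and $\mathcal{Z}_i$ has the same $Y^\Theta$-weight as $B_iB_{\tau(i)}$.
\item The trick is best phrased on the free product itself. A monomial of nonzero weight contains a generator of nonzero weight, and $\chi$ sends all such generators to $0$.
\item $[F_j,B_i]=0$ is not a relation of $\uqb$ when $a_{ji}\ne0$. This is harmless, because you only need to respect the relations that actually occur.
\end{enumerate}
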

\begin{proof}
    Consider the quantum symmetric pair $(\uq',\uqb'):=
    (\uq_{I\setminus\Ins},\uqb_{I\setminus\Ins})$.

    We extend $\xi$ arbitrarily to $Y$.
    Since $2Q'\le 2Q_\uqb$ ($Q'$ being the root lattice of $I\setminus\Ins$,
    embedded into $Y$), there exists a character $\chi'$ of the
    quantum group $\uq'$ that restricts to $\xi$ on $Y^\Theta$
    by Proposition~\ref{prop-char-U}.
    We can then restrict $\chi'$ to $\uqb'$, where it maps all $B_i$ to 0 
    ($i\not\in\Ins$).

    Let $A$ be the (free) algebra generated by $\uqb'$ and
    $\uqbns$, and define its character $\chi$ by having
    \[
        x\in \uqb'\mapsto \chi'(x),\qquad
        x\in \uqbns \mapsto \eta(x).
    \]
    Our goal now is to show that $\chi$ in fact factors through $\uqb$, which
    is a quotient of $A$ by the following relations:
    \begin{align*}
        K_h B_j &= B_j K_h\qquad (h\in Y^\Theta, j\in\Ins)\\
        E_i B_j &= B_j E_i\qquad (i\in I_\bullet, j\in\Ins)\\
        F_i B_j &= B_j F_i\qquad (i\in I_\bullet, j\in\Ins)\\
        F_{ij}(B_i,B_j) &= C_{ij} \qquad (i\in I_\circ\setminus\Ins, j\in\Ins,
        \text{and vice-versa})
    \end{align*}
    (see \cite[Theorem~7.4]{Kol14}).
    Note that the first three conditions are trivially satisfied because
    $k$ is commutative, so it suffices to consider the Serre relations.
    Note that as one of $\chi(B_i),\chi(B_j)$ is always zero, the left-hand side,
    $\chi(F_{ij}(B_i,B_j))$ is zero, so that it suffices to show that
    $\chi(C_{ij})=0$.

    We first consider the case where $I[i]$ is non-split, and $I[j]$ is split,
    then $i\ne\tau(j)$, so we only need to consider the $\delta_{i,\tau(i)}$
    term of $C_{ij}$ and hence assume without loss of generality that
    $\tau(i)=i$ (otherwise $C_{ij}=0$).
    This means in particular, that $I[i]$ cannot be quasi-split (as it would
    be split otherwise), so that $\chi(\mathcal{Z}_i)=0$ by 
    Lemma~\ref{lem-Z-annihilate}.
    By \cite[Theorem~7.4]{Kol14}, for $\langle h_i,\alpha_j\rangle=0,-1,-2$ 
    (the only three possibilities), we have 
    $\chi(C_{ij})\propto\chi(\mathcal{Z}_i)=0$, which establishes the
    Serre relation.

    If conversely $I[i]$ is split and $I[j]$ is not, we also have $i\ne\tau(j)$,
    so that we can still focus on the $\delta_{i,\tau(i)}$ terms.
    In this case, we have $\chi(C_{ij})\propto\chi(B_j)=0$, which establishes
    the other Serre relation.
\end{proof}

\begin{proof}[{Proof of Theorem~\ref{thm-main-thm}}]
    For well-definedness, note that by Lemma~\ref{lem-xi-factors}, $\xi$
    is a group homomorphism that factors through $Y^\theta/2Q_\uqb$.
    Moreover, since $\chi|_{\uqbns}$ is the restriction of an algebra 
    homomorphism to a subalgebra, it is still an algebra homomorphism.

    Let now $\chi_1,\chi_2\in\widehat{\uqb}$ restrict to the same
    group homomorphism $\xi: Y^\theta/2Q_\uqb\to k^\times$ and
    the same character of $\uqbns$.
    We know that $\uqb$ is generated by $k[Y^\Theta]$, by $\uq_\bullet$,
    and by the $B_i$ ($i\in I_\circ$).
    By assumption we therefore already know that $\chi_1,\chi_2$ coincide on
    all generators except $E_i,F_i$ ($i\in I_\bullet$) and
    $B_i$ ($i\in I_\circ\setminus\Ins$).
    By Lemma~\ref{lem-stuff-mapped-to-0} the remaining generators are mapped
    to 0 by $\chi_1,\chi_2$.
    Consequently, $\chi_1=\chi_2$, which proves injectivity.

    Surjectivity follows from Proposition~\ref{prop-surj}.
\end{proof}

We see that the characters of $\uqb$ can be described fully by characters
of the Abelian group $Y^\Theta/2Q_\uqb$ and by characters of the
split part $\uqbns$ of $\uqb$.

We shall now proceed to classify the characters of split quantum symmetric pairs.

\section{Characters for Split QSPs}\label{sec-split}
We now assume that $(\uq,\uqb)$ is split and that $Y^\Theta=0$.
Now, $\uqb$ is generated by $(B_i)_{i\in\Ins}$ subject to the Serre relations 
from \cite[Equation~3.11f.]{CLW20}.

As every Serre relation only concerns two generators, we first consider the
case where $I$ is of rank 2.

\begin{lemma}\label{lem-necessary-serre-2}
    Assume $I=\set{1,2}$, say $\abs{\alpha_1}^2\ge\abs{\alpha_2}^2$.
    Let $b_1,b_2\in k$ and define $\chi: k\langle B_1,B_2\rangle\to k$
    by mapping $B_i\mapsto b_i$ ($i=1,2$).
    Write $\beta_i:=\frac{b_i^2}{\bm{c}_iq_i}$ and define the
    two polynomials (depending on the length of $\alpha_i$):
    \begin{align*}
        f_{-1,i}(x) &:= ([2]_{q_i}-2)x+1 = (q_i^{1/2}-q_i^{-1/2})^2x+1\\
        f_{-3,i}(x) &:= (q_i^{1/2}-q_i^{-1/2})^2(q_i^{3/2}-q_i^{-3/2})^2x^2
        \\&\qquad - (q_i^{3/2}-q_i^{-3/2})^2(q_i^2-2q_i+4-2q_i^{-1}+q_i^{-2})x\\
        &\qquad+ [3]_{q_i}^2.
    \end{align*}
    $\chi$ defines a character of $\uqb$ if and only if:
    \begin{description}
        \item[Type $\mathsf{A}_1\times\mathsf{A}_1$ or $\mathsf{A}_1^{(1)}$]
        no further conditions;
        \item[Type $\mathsf{A}_2$] $\beta_i=\beta_j=0$ or
        $f_{-1,i}(\beta_i)=f_{-1,j}(\beta_j)=0$;
        \item[Type $\mathsf{B}_2$ or
        $\mathsf{A}_2^{(2)}$] $\beta_j=0$ or $f_{-1,i}(\beta_i)=0$;
        \item[Type $\mathsf{G}_2$] $\beta_i=\beta_j=0$ or 
        $f_{-1,i}(\beta_i)=f_{-3,j}(\beta_j)=0$.
    \end{description}
\end{lemma}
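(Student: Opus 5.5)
The plan is to work directly with the presentation of $\uqb$ in the split case with $Y^\Theta=0$. By \cite[Equation~3.11f.]{CLW20}, $\uqb$ is the free algebra $k\langle B_1,B_2\rangle$ modulo the two $\imath$Serre relations $S_{12}=0$ and $S_{21}=0$, one for each ordered pair. Here $S_{ij}$ is a noncommutative polynomial in $B_i,B_j$ that is linear in $B_j$. It consists of the quantum Serre polynomial $F_{ij}(B_i,B_j)$ plus lower-order correction terms. The corrections have degree $\le -a_{ij}-1$ in $B_i$, only keep the parity of $1-a_{ij}$, and have coefficients that are polynomials in $\bm{c}_iq_i$. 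Hence $\chi$ factors through $\uqb$ if and only if $\chi(S_{12})=\chi(S_{21})=0$.

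First, evaluate a single $S_{ij}$ at commuting scalars $B_i\mapsto b_i$, $B_j\mapsto b_j$. Because $S_{ij}$ is linear in $B_j$ and every monomial has the parity of $1-a_{ij}$ in $B_i$, the evaluation has the form
\[
\chi(S_{ij}) = b_j\, b_i^{\varepsilon}\, P_{a_{ij},i}(\beta_i),
\]
with $\varepsilon\in\{0,1\}$ and $P_{a_{ij},i}$ a polynomial. Dividing by the appropriate power of $\bm{c}_iq_i$ is exactly what makes $\beta_i=b_i^2/(\bm{c}_iq_i)$ the natural variable.

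I then compute $P_{a,i}$ case by case.
\begin{itemize}
\item For $a_{ij}=0$, the relation is $B_iB_j=B_jB_i$, which is automatic.
\item For $a_{ij}=-1$, the relation reads $B_i^2B_j-[2]_{q_i}B_iB_jB_i+B_jB_i^2=q_i\bm{c}_iB_j$. Its evaluation is $-b_j\bm{c}_iq_i\,f_{-1,i}(\beta_i)$.
\item For $a_{ij}=-2$, the leading part evaluates to $b_i^3b_j\sum_r(-1)^r\binom{3}{r}_{q_i}=0$. The correction terms are $q$-commutator-type expressions of the form $[2]^2_{q_i}\bm{c}_iq_i(B_iB_j-B_jB_i)$, which also vanish under a character. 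So this relation imposes no condition.
\item For $a_{ij}=-4$ (type $\mathsf{A}_2^{(2)}$), I expect the same vanishing, by the symmetry $r\leftrightarrow 1-a_{ij}-r$ of the coefficients combined with odd total degree. I would verify this using the $\imath$divided-power form of the relation from \cite{CLW20}.
\item For $a_{ij}=-3$, the evaluation is $b_j$ times a quadratic in $\beta_i$. The claim is that this quadratic is proportional to $f_{-3,i}$.
\end{itemize}

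Finally, I combine the two conditions for each Cartan type. In type $\mathsf{A}_2$ the conditions are ($b_j=0$ or $f_{-1,i}(\beta_i)=0$) and ($b_i=0$ or $f_{-1,j}(\beta_j)=0$). Since $f_{-1,j}(0)=1\ne0$, the choice $b_j=0$ forces $b_i=0$, and symmetrically. This gives exactly the dichotomy in the statement. For $\mathsf{G}_2$ the same argument applies, using $f_{-3,j}(0)=[3]_{q_j}^2\ne0$ since $q$ is not a root of unity. For $\mathsf{B}_2$ and $\mathsf{A}_2^{(2)}$ only the relation with $a_{ij}=-1$ contributes, which gives the single stated condition. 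For $\mathsf{A}_1\times\mathsf{A}_1$ and $\mathsf{A}_1^{(1)}$ (where $a_{ij}=0$ or $-2$) both relations are automatic.

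The main obstacle is the explicit evaluation of the $a_{ij}=-3$ and $a_{ij}=-4$ relations, whose correction terms are long. I would first use the generating-function / $\imath$divided-power formulation of \cite{CLW20}, in which $B_i^{(n)}$ evaluated at a scalar is an explicit product. This should reduce each $\imath$Serre relation to a terminating $q$-binomial sum. For $a_{ij}=-4$ I expect that sum to vanish identically, and for $a_{ij}=-3$ I expect it to factor as $b_j\cdot\mathrm{const}\cdot f_{-3,i}(\beta_i)$. If that route becomes messy, the fallback is a direct expansion with computer algebra, checking the coefficients of $1$, $\beta_i$ and $\beta_i^2$ against $f_{-3,i}$.
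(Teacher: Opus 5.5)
Your strategy coincides with the paper's. You use the $\imath$Serre presentation of \cite{CLW20}, evaluate each relation at commuting scalars, and combine the two conditions using that the relevant polynomials do not vanish at $0$. Your $a_{ij}=-1$ evaluation and your final case analysis match the paper. Your treatment of even $a_{ij}$ via $\imath$divided powers is also sound: for $\overline{a_{ij}}=\bar 0$ both factors carry the same parity, so the terms $r$ and $1-a_{ij}-r$ cancel under $\chi$. This is more explicit than the paper, which simply asserts that even $a$ imposes nothing, and it also settles $a_{ij}=-4$.

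The gap is the case $a_{ij}=-3$. It is the only real computation in the lemma, and you leave it as an expectation; that expectation is false for $f_{-3,i}$ as printed. Put $t=q_i\bm{c}_i$. The evaluated relation is the one displayed in the paper's proof, and your $\imath$divided-power route reproduces it, using $B^{(4)}_{i,\bar 1}=(B_i^2-t)(B_i^2-[3]_{q_i}^2t)/[4]^!_{q_i}$, $B^{(3)}_{i,\bar 0}=B_i(B_i^2-[2]_{q_i}^2t)/[3]^!_{q_i}$ and so on. It reads
\[
    b_j\,t^2\bigl(A\beta_i^2+E\beta_i+[3]_{q_i}^2\bigr)=0,\qquad
    E=[4]_{q_i}\bigl([2]_{q_i}^2+1\bigr)-2[3]_{q_i}^2-2,
\]
where $A=(q_i^{1/2}-q_i^{-1/2})^2(q_i^{3/2}-q_i^{-3/2})^2$ and $E=(q_i^{3/2}-q_i^{-3/2})^2(q_i^2-2q_i+4-2q_i^{-1}+q_i^{-2})$.

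So the linear coefficient has the opposite sign to the one in $f_{-3,i}$, and your planned coefficient comparison would fail. Write $u=(q_i^{1/2}-q_i^{-1/2})^2x$. The correct quadratic is $(q_i+1+q_i^{-1})^2(u+1)\bigl(u+(q_i-1+q_i^{-1})^2\bigr)$, which is in particular divisible by $f_{-1,i}$. The printed $f_{-3,i}$ is instead $(q_i+1+q_i^{-1})^2(u-1)\bigl(u-(q_i-1+q_i^{-1})^2\bigr)$.

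Your combination step only uses that the constant term $[3]_{q_i}^2$ is nonzero. Hence your argument, and the paper's, goes through once $f_{-3,i}$ is replaced by the corrected polynomial. With the printed $f_{-3,i}$, the $\mathsf{G}_2$ clause fails for generic $q$.
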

\begin{proof}
    Let $(i,j)=(1,2)$ or $(2,1)$.
    Write $a:= \langle h_i,\alpha_j\rangle$,
    then by \cite[Equation~3.11f.]{CLW20} and application of $\chi$, 
    we obtain the following Serre relations for $(i,j)$:
    no further conditions for $a$ even,
    \[
        (2-[2]_{q_i})b_i^2b_j =q_i \bm{c}_i b_j
    \]
    for $a=-1$, and
    \begin{align*}
        (2-2[4]_{q_i}+[2]_{q_i^2}[3]_{q_i}) b_i^4b_j &= 
        -[2]_{q_i} ([2]_{q_i}[4]_{q_i} + q_i^2 + q_i^{-2})
        q_ic_i b_i^2b_j\\
        &\qquad + 2([3]_{q_i}^2+1)q_ic_ib_i^2b_j\\
        &\qquad- [3]_{q_i}^2 q_i^2c_i^2 b_j
    \end{align*}
    for $a=-3$.
    Evidently, only cases where $a$ is odd are of interest.
    These cases can be rewritten to
    $b_j f_{-1,i}(\beta_i)=0$ ($a=-1$) and
    $b_j f_{-3,i}(\beta_i)=0$ ($a=-3$).

    $\chi$ factors through $\uqb$ if and only if the above relations are
    satisfied for $(1,2)$ and $(2,1$).
    \begin{description}
        \item[Type $\mathsf{A}_1\times\mathsf{A}_1$ or $\mathsf{A}_1^{(1)}$]
            The relations are always satisfied.
        \item[Type $\mathsf{A}_2$] The relations read
            \[
                b_1 f_{-1,2}(\beta_2) = b_2 f_{-1,1}(\beta_1)=0.
            \]
            As $f_{-1,i}(0)\ne0$ for $i=1,2$, the above is equivalent to
            $\beta_1=\beta_2=0$ or $f_{-1,1}(\beta_1)=f_{-1,2}(\beta_2)=0$.
        \item[Type $\mathsf{B}_2$ or $\mathsf{A}_2^{(2)}$] There is only one
        interesting relation:
        \[
            b_2 f_{-1,1}(\beta_1)=0,
        \]
        which is true satisfied if and only if $\beta_2=0$ or
        $f_{-1,1}(\beta_1)=0$.
        \item[Type $\mathsf{G}_2$] The relations read
        \[
            b_2 f_{-1,1}(\beta_1) = b_1 f_{-3,2}(\beta_2) = 0,
        \]
        which is equivalent to $\beta_1=\beta_2=0$ or
        $f_{-1,1}(\beta_1)=f_{-3,2}(\beta_2)=0$ as neither $f_{-1}$ nor
        $f_{-3}$ have 0 as root.
    \end{description}
\end{proof}

Recursion of these conditions for general rank suggest the following definition:

\begin{definition}\label{def-split-char}
    Write $\Gamma$ for the subgraph of $I$ where all higher-degree edges
    are removed.
    Note that on every connected component $A$ of $\Gamma$, the root length
    $\abs{\alpha}$ is constant, so in particular we can define
    $f_{-1,A},f_{-3,A}$.

    A family $(\beta_i)_{i\in I}$ is called a \emph{split character description}
    if the following conditions are satisfied:
    \begin{enumerate}
        \item for every $i\in I$, $\beta_i\bm{c}_iq_i$ has a square root in $k$;
        \item on every connected component $A$ of $\Gamma$, 
        $\beta$ is constant, say $\beta_A$;
        \item for every connected component $A$ of $\Gamma$ of rank $>1$, we
        have $\beta_A=0$ or $f_{-1,A}(\beta_A)=0$;
        \item whenever two connected components $A_1,A_2$ of $\Gamma$ are 
            connected in $I$ by
        a directed double or quadruple edge from $A_1$ to $A_2$, we have
        $\beta_{A_2}=0$ or $f_{-1,A_1}(\beta_{A_1})=0$.
        \item whenever two connected components $A_1,A_2$ of $\Gamma$ are 
            connected in $I$ by a triple edge from $A_1$ to $A_2$, we have 
            $\beta_{A_1}=\beta{A_2}=0$ or
        $f_{-1,A_1}(\beta_{A_1})=f_{-3,A_2}(\beta_{A_2})=0$.
    \end{enumerate}
\end{definition}

\begin{proof}[{Proof of Theorem~\ref{thm-char-split}}]
    Assume $\chi$ factors through a character of $\uqb$.
    By definition of $\beta_i$, the element $\beta_ic_iq_i$ has a square root
    (for all $i\in I$).
    
    Let $A\subset\Gamma$ be a connected component.
    In particular, $A$ is path connected.
    Let $i,j\in A$ and let $i=i_0,\dots,i_r=j$ be a path connecting $i$ with
    $j$.
    If $i=j$, then $\beta_i=\beta_j$ evidently.
    Otherwise, we apply Lemma~\ref{lem-necessary-serre-2} to any $\set{i_{k-1},i_k}$
    (which is of type $\mathsf{A}_2$) and obtain that $\beta_i=\beta_j$
    is either 0 or the root of $f_{-1,A}$.
    This shows that $\beta_i=\beta_j$, hence that $\beta$ is constant on $A$
    and also restricts the values that $\beta_A$ can take in case $\# A>1$.

    The remaining two conditions follow by direct application of 
    Lemma~\ref{lem-necessary-serre-2} to the two nodes adjacent to the
    double, triple, or quadruple edge.

    For the converse let $i,j\in I$ be distinct (without loss of generality
    $\abs{\alpha}_i\ge\abs{\alpha_j}$).
    Depending on the type of diagram that $\set{i,j}$ is, we can conclude the
    following about $\beta_i,\beta_j$:
    \begin{description}
        \item[Type $\mathsf{A}_1\times\mathsf{A}_1$ or $\mathsf{A}_1^{(1)}$]
        no further conditions;
        \item[Type $\mathsf{A}_2$] Then $i,j$ belong to the same connected
        component $A$ (with more than one element) of $\Gamma$.
        We have $\beta_i=\beta_j=0$ or
        $f_{-1,i}(\beta_i)=f_{-1,j}(\beta_j)=0$.
        \item[Type $\mathsf{B}_2$ or $\mathsf{A}_2^{(2)}$] $i$ and $j$ belong
        to connected components in $\Gamma$ that are connected by a
        double or quadruple edge.
        Then $\beta_j=0$ or $f_{-1,i}(\beta_i)=0$;
        \item[Type $\mathsf{G}_2$] $i$ and $j$ belong to connected components
        in $\Gamma$ that are connected by a triple edge.
        Then $\beta_i=\beta_j=0$ or 
        $f_{-1,i}(\beta_i)=f_{-3,j}(\beta_j)=0$.
    \end{description}
    This implies by Lemma~\ref{lem-necessary-serre-2} that $\chi$ preserves
    the Serre relations between $i$ and $j$.
    We conclude that $\chi$ preserves all Serre relations and hence defines
    a character of $\uqb$.
\end{proof}

\begin{remark}
    Theorems~\ref{thm-main-thm} and \ref{thm-char-split} conclude the
    classification of characters.
    To summarise: a character of $\uqb$ for any generalised quantum affine
    pair $(\uq,\uqb)$ of finite or affine type is given by
    \begin{enumerate}
        \item A character $\xi$ of the Abelian group $Y^\Theta/2Q_\uqb$ and
        \item an appropriate choice of square roots for a split character
        description $(\beta_i)_{i\in\Ins}$ of the split subdiagram $\Ins$.
    \end{enumerate}
\end{remark}

\begin{example}
    For $n>1$, the $\imath$quantum group of type $\mathsf{CI}_n$ has the
    following characters (if $k$ is algebraically closed):
    \begin{enumerate}
        \item The classical characters: $B_1,\dots,B_{n-1}\mapsto 0$ and
            $B_n$ mapped to an arbitrary element of $k$;
        \item $2^{n-1}$ non-classical characters mapping $B_n$ to 0 and
            each $B_i$ to a square root of $\frac{-\bm{c}_iq_i}{q_i+q_i^{-1}-2}$
            for $i=1,\dots,n-1$.
    \end{enumerate}

    For quantum symmetric pairs of finite type, unless $(\uq,\uqb)$ is of type 
    $\mathsf{AI}$, $\mathsf{AIII}_{2p-1,p}$,
    $\mathsf{BI}$, $\mathsf{CI}$, $\mathsf{DI}$, $\mathsf{DIII}_{\mathrm{even}}$,
    $\mathsf{EI}$, $\mathsf{EII}$, $\mathsf{EV}$, \dots, $\mathsf{EIX}$,
    $\mathsf{FI}$, $\mathsf{G}$, characters of $\uqb$ are entirely given by
    characters of $Y^\Theta/2Q_\uqb$.
\end{example}

\section{Action of \texorpdfstring{$\widehat{\uq}$}{Uhat}}\label{sec-action}
Since the category of $\uqb$-modules is a right tensor category over the
category of $\uq$-modules, we obtain a right action of the group $\widehat{\uq}$
on the set $\widehat{\uqb}$.

\begin{proposition}\label{prop-character-action}
    Let $\chi_2\in\widehat{\uq}$ be described by $\xi_2: Y/2Q\to k^\times$
    as in Proposition~\ref{prop-char-U} and let $\chi_1\in\widehat{\uqb}$
    map
    \[
        K_h\mapsto \xi_1(h),\qquad B_i\mapsto b_i
    \]
    for $h\in Y^\Theta$, $\xi_1: Y^\Theta/2Q_\uqb\to k^\times$, and
    $i\in\Ins$.
    Then $\chi_1\chi_2$ maps
    \[
        K_h\mapsto (\xi_1\xi_2)(h),\qquad
        B_i\mapsto \xi_2(\alpha_i)b_i
    \]
    for $h\in Y^\Theta$ and $i\in\Ins$.
\end{proposition}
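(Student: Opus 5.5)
The product $\chi_1\chi_2$ is the character of the tensor product of the one-dimensional $\uqb$-module $k_{\chi_1}$ with the one-dimensional $\uq$-module $k_{\chi_2}$. Since $\uqb$ is a right coideal subalgebra, $\Delta(\uqb)\subset\uqb\otimes\uq$, so it is given by
\[
    (\chi_1\chi_2)(x) = \chi_1(x_{(1)})\chi_2(x_{(2)})\qquad (x\in\uqb).
\]
By Theorem~\ref{thm-main-thm}, a character of $\uqb$ is determined by its values on $K_h$ ($h\in Y^\Theta$) and on $B_i$ ($i\in\Ins$). It therefore suffices to evaluate this formula on these generators, using the coproducts of $K_h$ and $B_i$.

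For $h\in Y^\Theta$ we have $\Delta(K_h)=K_h\otimes K_h$. This gives immediately
\[
    (\chi_1\chi_2)(K_h)=\xi_1(h)\xi_2(h).
\]

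For $i\in\Ins$, the node is split: $\tau(i)=i$ and $\alpha_i$ is orthogonal to $I_\bullet$, so $T_{w_\bullet}(E_i)=E_i$. Hence
\[
    B_i = F_i+\bm{c}_iE_iK_i^{-1}+\bm{s}_iK_i^{-1},
\]
writing $K_i:=K_{\alpha_i}$. Using the conventions of \cite[\S3.3.4]{Lus10}, I expect the coproduct to take the form
\[
    \Delta(B_i)=B_i\otimes K_i^{-1}+1\otimes F_i+\bm{c}_iK_i^{-1}\otimes E_iK_i^{-1}.
\]
This is the same shape as the formula used in the proof of Lemma~\ref{lem-xi-factors}, with $\tau(i)=i$ so that $\mathcal{Z}_i=1$. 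The $\bm{s}_i$ term is absorbed into $B_i\otimes K_i^{-1}$ because $\Delta(K_i^{-1})=K_i^{-1}\otimes K_i^{-1}$. Applying $\chi_1\otimes\chi_2$ and using $\chi_2(E_i)=\chi_2(F_i)=0$ (Proposition~\ref{prop-char-U}), only the first term survives:
\[
    (\chi_1\chi_2)(B_i)=b_i\,\xi_2(-\alpha_i).
\]
Since $\xi_2$ is trivial on $2Q$, we have $\xi_2(-\alpha_i)=\xi_2(\alpha_i)$, and this gives the claimed formula.

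The main obstacle is pinning down the exact form of $\Delta(B_i)$ for split nodes in the paper's conventions (\cite[Prop.~5.2]{Kol14} or a direct computation from $\Delta(E_i),\Delta(F_i)$). In particular, one must check whether the group-like factor attached to $B_i$ is $K_i^{-1}$ or $K_i$. Either way the conclusion is unaffected, since $\xi_2$ takes the same value on $\pm\alpha_i$. Mixed terms in which $\chi_1$ must be evaluated on elements outside $\uqb$ only pair with $E_i$ or $F_i$ in the right factor, so they vanish under $\chi_2$ regardless.
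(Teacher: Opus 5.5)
Your proposal is correct and is essentially the paper's proof: you evaluate $\chi_1\otimes\chi_2$ on $\Delta(K_h)=K_h\otimes K_h$ and on $\Delta(B_i)$, kill the terms with $F_i$ and $E_iK_i^{-1}$ in the right factor via $\chi_2(E_i)=\chi_2(F_i)=0$, and use that $\xi_2$ is trivial on $2Q$ to replace $\xi_2(-\alpha_i)$ by $\xi_2(\alpha_i)$. One small slip: for $i\in\Ins$ the third term of $\Delta(B_i)$ is $\bm{c}_i\,1\otimes E_iK_i^{-1}$, as your own remark $\mathcal{Z}_i=1$ indicates, not $\bm{c}_iK_i^{-1}\otimes E_iK_i^{-1}$, so $\chi_1$ is never evaluated outside $\uqb$ — and that term vanishes under $\chi_2$ in any case.
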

\begin{proof}
    Note that $(\chi_1\chi_2)(x) = \chi_1(x_{(1)})\chi_2(x_{(2)})$ for
    $x\in\uqb$.

    Since $\uqb$ is a right coideal and since the comultiplication $\Delta$ is
    an algebra homomorphism, $\chi_1\chi_2$ is a well-defined character
    of $\uqb$.

    For the description, note that
    \[
        (\chi_1\chi_2)(K_h) = \chi_1(K_h)\chi_2(K_h) = \xi_1(h)\xi_2(h)
    \]
    for $h\in Y^\Theta$.
    Furthermore, for $i\in I_\bullet$ we have
    \begin{align*}
        (\chi_1\chi_2)(E_i) &= \chi_1(E_i) + \chi_1(K_i)\chi_2(E_i) = 0\\
        (\chi_1\chi_2)(F_i) &= \chi_1(F_i)\chi_2(K_i)^{-1} + \chi_2(F_i) = 0,
    \end{align*}
    Furthermore, for $i\in\Ins$ we have
    \begin{align*}
        (\chi_1\chi_2)(B_i) &= \chi_1(B_i)\chi_2(K_i)^{-1}
        + \chi_2(F_i) + c_i \chi_2(E_iK_i^{-1})
        = \xi_2(-\alpha_i)b_i.
    \end{align*}
    Moreover, since $2\alpha_i\in 2Q$, we have $\xi_2(\alpha_i)=\xi_2(-\alpha_i)$
    by Lemma~\ref{lem-U-lattice-descent}.
\end{proof}

\begin{remark}
    Within our description of characters of $\uqb$, the action of $\widehat{\uq}$
    changes the character of $Y^\Theta/2Q_\uqb$ by multiplication, and
    may change the choices of square roots of the split character description.
\end{remark}

\subsection{Integrable Characters}\label{sec-integrable}
The integrable characters for Satake diagrams of finite type have been
classified in \cite{Mee26}.
We shall now recover this classification by imposing a few na\"ive conditions
on the values.
To make our lives easier, we assume the additional conditions of
\cite[Equation~3.1ff.]{Wat23}, in particular that $\bm{c}_i=q_i^{-1}$ for
$i\in\Ins$.
Moreover, we assume, as in \cite[Remark 3.3.9]{Wat23} that
$\bm{s}_i = [s_i]_{q_i}$ for $i\in\mathcal{S}$ and $s_i\in\Z$.
These parameters are in particular specialisable.

The big classification result of Meereboer (\cite{Mee26})
is as follows: integrable characters map
\[
    K_h\mapsto q^{\langle h,\lambda\rangle},\qquad
    B_i\mapsto [s_i+n_i]_{q_i}
\]
for $h\in Y^\Theta, i\in\mathcal{S}$, $\lambda\in X$ dominant, and
$n_i\in\Z$ where $\abs{n_i}\in\langle\lambda,\alpha_i\rangle-2\N_0$, 
and all other generators to 0.
Specialisation plays an important role here, as part of the result is that
specialisation is bijective.

Since we do not assume anything about $q$ except that it is not a root of unity,
we cannot rigorously proceed with specialisation arguments, but considering
the expressions for $f_{-1,i}$ and $f_{-3,i}$, we quickly see that any 
reasonable attempt at specialisation makes them constant nonzero polynomials.
In lieu of requiring specialisability, we can therefore consider what happens
to our definition of a split character description if we eliminate the
possibility of $f_{-1,A}(\beta_A)=0$ or $f_{-3,A}(\beta_A)=0$:
\begin{enumerate}
    \item For every $i\in I$, $\beta_i\bm{c}_iq_i$ has a square root in $k$;
    \item on every connected component $A$ of $\Gamma$, $\beta$ is constant,
        say $\beta_A$;
    \item for every connected component $A$ of $\Gamma$ of rank $>1$,
        $\beta_A=0$;
    \item whenever two connected components $A_1,A_2$ are connected by a
        directed double or quadruple edge from $A_1$ to $A_2$, we have
        $\beta_{A_2}=0$;
    \item whenever two connected components $A_1,A_2$ are connected by a
        directed triple edge from $A_1$ to $A_2$, we have 
        $\beta_{A_1}=\beta_{A_2}=0$.
\end{enumerate}
This simplifies to the following rules for $(\beta_i)_{i\in\Ins}$:
$\beta_i=0$ unless $2|\langle h_j,\alpha_i\rangle$ for
all $j\in\Ins$, i.e. $\beta_i=0$ unless $i\in\mathcal{S}$.

Moreover, from \cite[Proposition~3.3.4, Remark~3.3.9]{Wat23}, we see
that in a one-dimensional quotient of $L(\lambda)$ (the standard
$\uq$-module with highest weight $\lambda$) as $\uqb$-modules,
we can only have $K_h$ acting as $q^{\langle h,\lambda\rangle}$ and
$B_i$ as $[s_i+n_i]_{q_i}$ with $\abs{n_i}\in\langle\lambda,\alpha_i\rangle - 2\N_0$,
which gives us precisely Meereboer's classification.

\printbibliography
\end{document}